\date{}
\def\tb{\qopname\relax o{tb}}
\newtheorem{prop}{Proposition}
\theoremstyle{remark}
\newtheorem{rema}{Remark}
\author {Ivan Dynnikov and Maxim Prasolov}
\title[Classification of Legendrian knots of topological type~$7_6$]{Classification of Legendrian knots of topological type~$7_6$ with
maximal Thurston--Bennequin number}
\thanks{This work is supported by the Russian Science Foundation under grant~14-50-00005}
\address{\noindent V.A. Steklov Mathematical Institute of Russian Academy of Science, 8 Gubkina Str., Moscow 119991, Russia}
\email{dynnikov@mech.math.msu.su}
\email{0x00002a@gmail.com}
\begin{document}
\maketitle

\begin{abstract}
We classify Legendrian knots of topological type~$7_6$ having
maximal Thurston--Bennequin number confirming the corresponding conjectures of~\cite{chong2013}.
\end{abstract}

This paper provides yet another illustration of the method of~\cite{distinguishing} for distinguishing
Legendrian knots.
The reader is referred to~\cite{distinguishing} for terminology.

If~$R$ is a(n oriented) rectangular diagram of a knot, then by~\emph{exchange class} of~$R$
denoted~$\mathscr E(R)$ we mean the set of all (oriented) rectangular diagrams obtained from~$R$ by exchange
moves.

We use the notation of~\cite{bypasses} for oriented types of stabilizations and destabilizations: $\overrightarrow{\mathrm I}$,
$\overrightarrow{\mathrm{II}}$, $\overleftarrow{\mathrm I}$, and
$\overleftarrow{\mathrm{II}}$.
The following table shows the correspondence with the notation of~\cite{OST}:

\centerline{\begin{tabular}{|l|c|c|c|c|}
\hline
notation of~\cite{bypasses}&
$\overrightarrow{\mathrm I}$&$\overleftarrow{\mathrm I}$&$\overrightarrow{\mathrm{II}}$&$\overleftarrow{\mathrm{II}}$\\\hline
notation of~\cite{OST}&
\emph{X:NE}, \emph{O:SW}&
\emph{X:SW}, \emph{O:NE}&
\emph{X:SE}, \emph{O:NW}&
\emph{X:NW}, \emph{O:SE}\\\hline
\end{tabular}}

A diagram obtained from~$R$ by a stabilization of type~$T$, where~$T\in\bigl\{\overrightarrow{\mathrm I},\overrightarrow{\mathrm{II}},
\overleftarrow{\mathrm I},\overleftarrow{\mathrm{II}}\bigr\}$, is denoted by~$S_T(R)$.
One can see that~$\mathscr E(R_1)=\mathscr E(R_2)$ implies~$\mathscr E\bigl(S_T(R_1)\bigr)=\mathscr E\bigl(S_T(R_2)\bigr)$
(this applies only to knots; in the case of many-component links, one should pay attention to which connected
components of the diagrams are modified by the stabilizations).
So, if~$E$ is an exchange class of oriented rectangular diagrams of a knot and~$R\in E$, then~$S_T(E)=\mathscr E\bigl(S_T(R)\bigr)$
is a well defined exchange class not depending on the concrete choice of~$R$.

By~$\xi_+$ we denote the standard contact structure in~$\mathbb R^3$,
and by~$\xi_-$ the mirror image of~$\xi_+$:
$$\xi_+=\ker(x\,dy+dz),\quad
\xi_-=\ker(x\,dy-dz).$$

By~$\mathscr L_+(R)$ (respectively, $\mathscr L_-(R)$) we denote the equivalence class
of $\xi_+$-Legendrian (respectively, $\xi_-$-Legendrian) knots defined by~$R$.
As one knows (see~\cite{bypasses,OST}) we have~$\mathscr L_+(R_1)=\mathscr L_+(R_2)$ (respectively, $\mathscr L_-(R_1)=\mathscr L_-(R_2)$)
if and only if~$R_1$ and~$R_2$ are related by a sequence of moves of the following kinds:
\begin{enumerate}
\item
exchange moves;
\item
stabilizations and destabilization of types~$\overrightarrow{\mathrm I}$ and~$\overleftarrow{\mathrm I}$ (respectively,
$\overrightarrow{\mathrm{II}}$ and~$\overleftarrow{\mathrm{II}}$).
\end{enumerate}
This implies, in particular, that if~$E$ is an exchange class and~$R\in E$, then~$\mathscr L_+(E)=\mathscr L_+(R)$ (respectively,
$\mathscr L_-(E)=\mathscr L_-(R)$) is a well defined equivalence class of $\xi_+$-Legendrian
(respectively, $\xi_-$-Legendrian) knots not depending on a concrete choice of~$R$.

The $\xi_+$-Legendrian (respectively, $\xi_-$-Legendrian) classes of our interest will be denoted~$7_6^{k+}$ (respectively,
$7_6^{k-}$, $k=1,2,3$, and numbered in the order that they follow in~\cite{chong2013}, see Figure~\ref{7_6-leg-fig}.
In the setting of~\cite{chong2013}
all knots are Legendrian with respect to the standard contact structure, but
each knot type~$K$ is considered together with its mirror
image~$m(K)$. The settings of the present paper are different in that we take the mirror image
of the contact structure, not of the knot. For the reader to easier see the correspondence with
the knots in the atlas~\cite{chong2013} we define the~$\xi_-$-Legendrian classes~$7_6^{k-}$, $k=1,2,3$,
through their mirror images (which are~$\xi_+$-Legendrian classes).

By~$r_\medvert$ we denote:
\begin{enumerate}
\item
in the context of Legendrian knots in~$\mathbb R^3$,
the reflection in the $xz$-plane: $r_\medvert(x,y,z)=(x,-y,z)$;
\item
in the context of rectangular diagrams, the reflection
in a vertical line: $r_\medvert(\theta,\varphi)=(-\theta,\varphi)$.
\end{enumerate}

Finally, $\mu$ denotes:
\begin{enumerate}
\item
in the context of Legendrian knots, the Legendrian mirroring, $\mu(x,y,z)=(x,-y,-z)$;
\item
in the context of rectangular diagrams, the reflection in the origin,
$\mu(\theta,\varphi)=(-\theta,-\varphi)$.
\end{enumerate}

\begin{figure}[ht]
\begin{tabular}{ccccc}
\includegraphics[scale=.42]{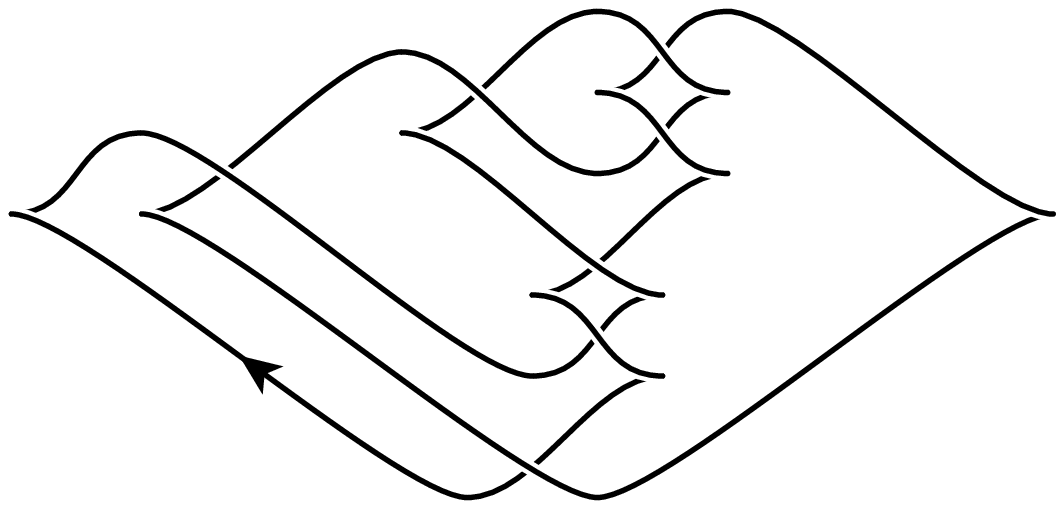}&
\includegraphics[scale=.42]{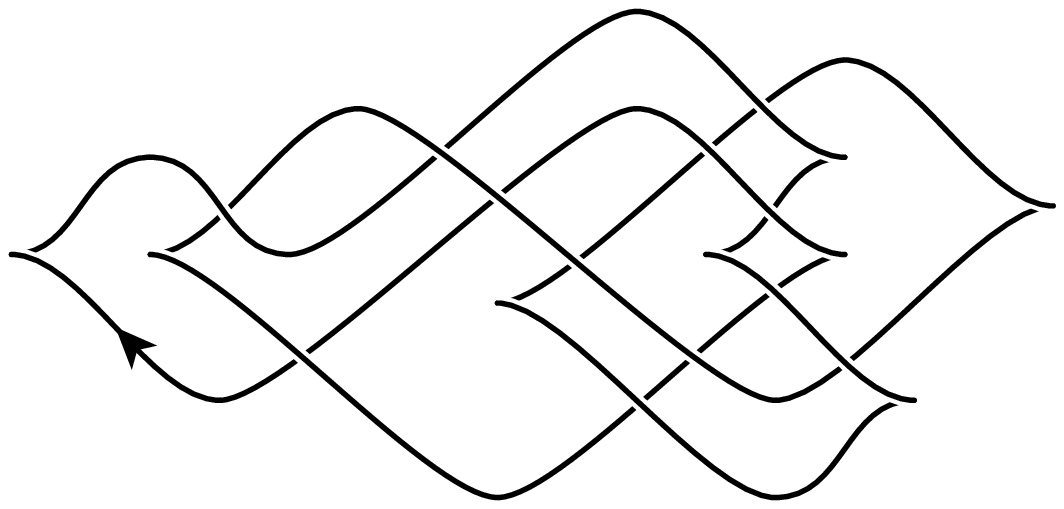}&
\includegraphics[scale=.42]{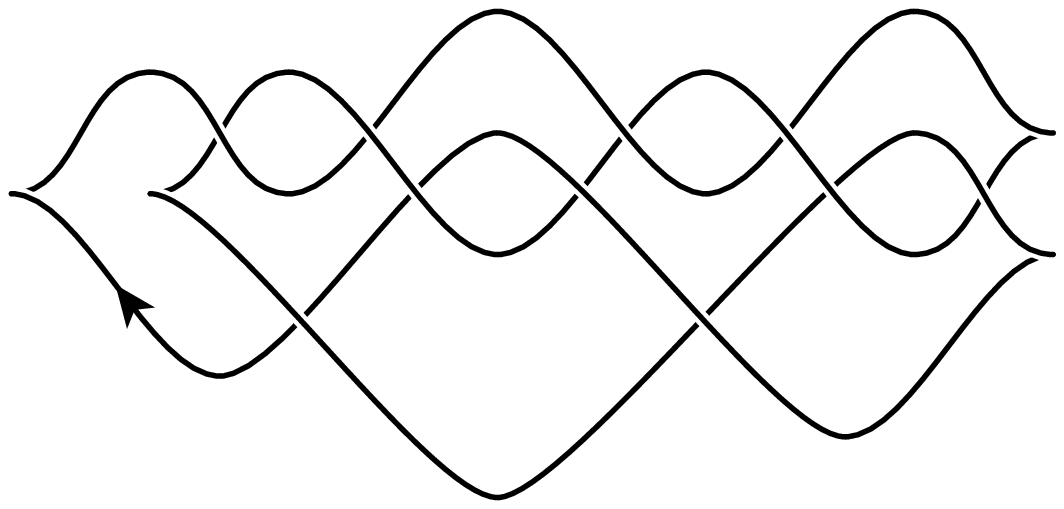}
\\
$7_6^{1+}$&$7_6^{2+}$&$7_6^{3+}$
\\[4mm]
\includegraphics[scale=.42]{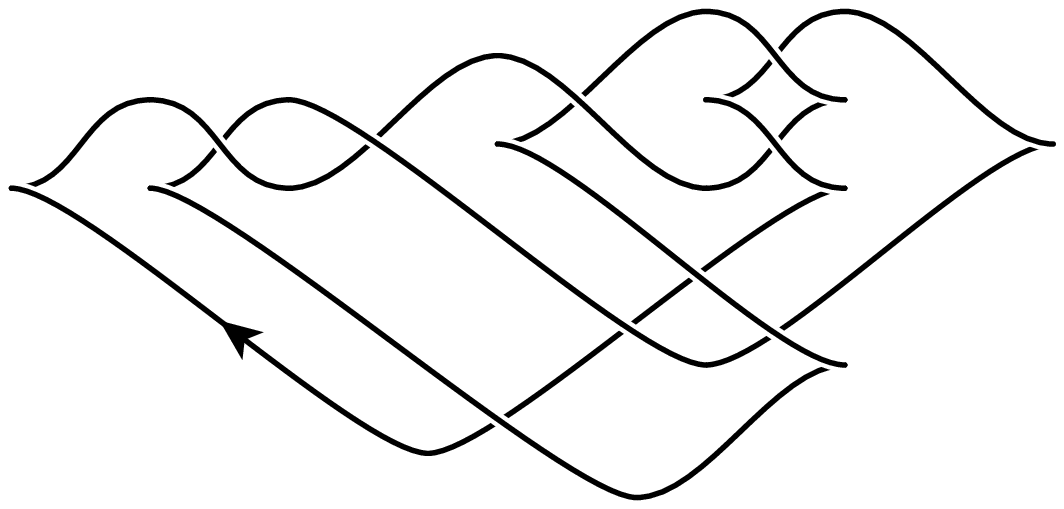}&
\includegraphics[scale=.42]{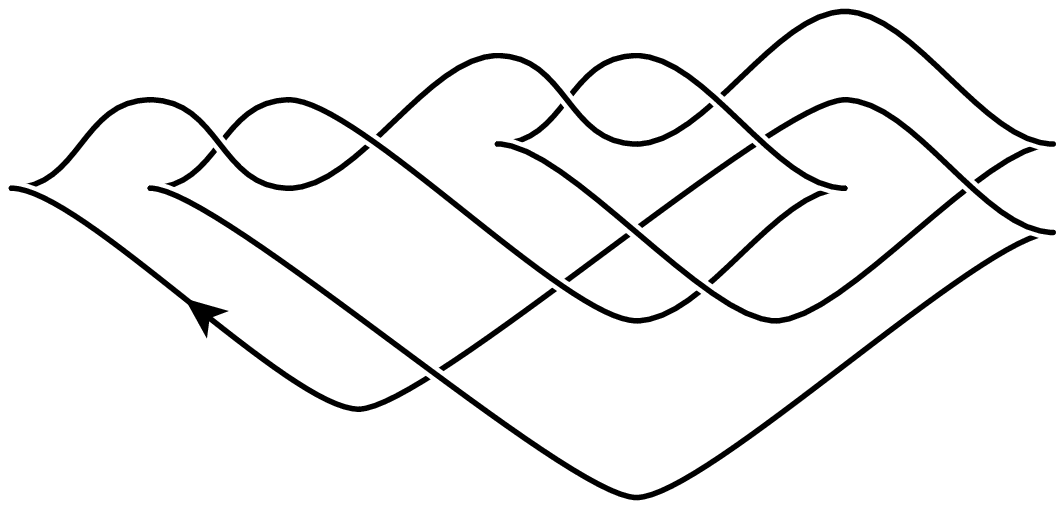}&
\includegraphics[scale=.42]{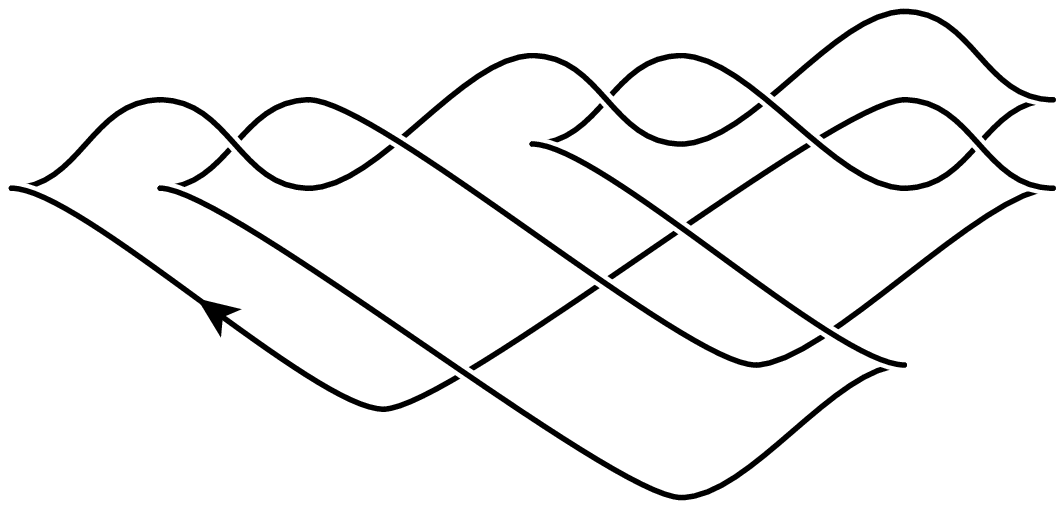}
\\
$r_\medvert(7_6^{1-})$&$r_\medvert(7_6^{2-})$&$r_\medvert(7_6^{3-})$
\end{tabular}
\caption{$\xi_\pm$-Legendrian classes of knots having topological type~$7_6$}\label{7_6-leg-fig}
\end{figure}

\begin{prop}
The following is a complete list, without repetitions, of $\xi_+$-Legendrian classes of
topological type~$7_6$ that have maximal possible Thurston--Bennequin number (which is~$-8$):
\begin{equation}\label{list1-eq}
7_6^{1+}=-\mu(7_6^{1+}),\ -7_6^{1+}=\mu(7_6^{1+}),\ 7_6^{2+}=-\mu(7_6^{2+}),\ -7_6^{2+}=\mu(7_6^{2+}),\ 7_6^{3+},\
-7_6^{3+},\ \mu(7_6^{3+}),\ -\mu(7_6^{3+}).
\end{equation}
\end{prop}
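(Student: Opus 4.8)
The plan is to recast everything in terms of exchange classes of rectangular diagrams and then perform a finite enumeration governed by the calculus recalled above. By the move theorem quoted above, two diagrams define the same $\xi_+$-Legendrian class precisely when they are joined by exchange moves and (de)stabilizations of types $\overrightarrow{\mathrm I}$ and $\overleftarrow{\mathrm I}$. A destabilization of either type raises the Thurston--Bennequin number by one, so a class realizing the maximal value $\tb=-8$ cannot be destabilized at all; it is represented by an exchange class on which no type-$\overrightarrow{\mathrm I}$ or type-$\overleftarrow{\mathrm I}$ destabilization is available. Call such exchange classes \emph{$\mathrm I$-reduced}. First I would enumerate all $\mathrm I$-reduced exchange classes of oriented rectangular diagrams of topological type~$7_6$: one generates the diagrams of~$7_6$ up to the relevant bound on complexity (grid number), groups them into exchange classes by the standard finite procedure, and keeps those admitting no type-$\mathrm I$ destabilization. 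The finiteness results underpinning the method of~\cite{distinguishing} are what make this search terminate and guarantee that the bound suffices.

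Next I would separate the resulting finite list by classical invariants. The value $\tb=-8$ is shared by all candidates, so the rotation number is the first discriminator; comparing the resulting blocks with the Legendrian mountain range of~$7_6$ in~\cite{chong2013} matches them against the eight entries of~(\ref{list1-eq}) and excludes any spurious candidate falling outside the mountain range. Two operations act throughout: orientation reversal (written~$-$) and the Legendrian mirroring~$\mu$, which on diagrams is reflection in the origin. Both send type-$\mathrm I$ moves to type-$\mathrm I$ moves (possibly interchanging the two flavours $\overrightarrow{\mathrm I}\leftrightarrow\overleftarrow{\mathrm I}$), so each of~$-$, $\mu$, and~$-\mu$ descends to an involution of the set of $\mathrm I$-reduced exchange classes and of $\xi_+$-Legendrian classes. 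This reduces the whole problem to understanding a few orbits.

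The asserted coincidences I would prove in two complementary ways. For each equality in~(\ref{list1-eq}), namely $7_6^{1+}=-\mu(7_6^{1+})$ and $7_6^{2+}=-\mu(7_6^{2+})$ (equivalently $\mu(7_6^{k+})=-7_6^{k+}$), it suffices to exhibit for a representative a finite sequence of exchange moves and type-$\mathrm I$ (de)stabilizations carrying it to its image under~$-\mu$; concretely one looks among the enumerated diagrams for a representative that is, up to exchange and type-$\mathrm I$ moves, invariant under~$-\mu$. The failure of any analogous symmetry for~$7_6^{3+}$ is exactly what forces the full four-element orbit $\{7_6^{3+},\,-7_6^{3+},\,\mu(7_6^{3+}),\,-\mu(7_6^{3+})\}$, giving the count $2+2+4=8$ entries in~(\ref{list1-eq}).

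The genuinely hard part, where the method of~\cite{distinguishing} is indispensable, is the set of \emph{non}-equivalences: that the eight classes are pairwise distinct and that no hidden coincidence collapses the list. For every pair $(E,E')$ not already separated by the Thurston--Bennequin and rotation numbers — in particular the four classes of the $7_6^{3+}$ orbit among themselves and the separation of the~$7_6^{1+}$, $7_6^{2+}$, $7_6^{3+}$ families from one another — I would apply the distinguishing invariant of~\cite{distinguishing} to certify that the corresponding $\mathrm I$-reduced exchange classes are not connected by type-$\overrightarrow{\mathrm I}$/type-$\overleftarrow{\mathrm I}$ (de)stabilizations, whence $\mathscr L_+(E)\ne\mathscr L_+(E')$ by the move theorem. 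I expect the principal obstacle to be twofold: securing \emph{completeness} of the enumeration (proving that no $\mathrm I$-reduced exchange class of~$7_6$ with $\tb=-8$ is overlooked, which requires the a priori grid-number bound), and performing the invariant computation with enough control to certify all the required non-equivalences simultaneously.
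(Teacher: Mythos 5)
Your proposal contains two genuine gaps, and the first one is fatal to its overall architecture. Your bookkeeping of how the moves affect the Thurston--Bennequin number is backwards: by the move theorem you quote, (de)stabilizations of types~$\overrightarrow{\mathrm I}$, $\overleftarrow{\mathrm I}$ \emph{preserve}~$\mathscr L_+$, hence preserve~$\tb_+$; it is the type-$\mathrm{II}$ moves that change~$\tb_+$ by one, while type-$\mathrm I$ moves change~$\mathscr L_-$. Consequently ``maximal~$\tb_+$'' does not imply ``no type-$\mathrm I$ destabilization is available'': the diagram~$S_{\overrightarrow{\mathrm I}}(R_1)$ represents the maximal class~$7_6^{1+}$ and visibly destabilizes, so your notion of ``$\mathrm I$-reduced'' does not capture maximality of~$\tb_+$ at all. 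Worse, the enumeration has no termination guarantee: since $\tb_+(R)+\tb_-(R)$ equals minus the grid number, diagrams with~$\tb_+(R)=-8$ exist with arbitrarily large grid number (arbitrarily negative~$\tb_-$), and deciding which of them admit no type-$\mathrm I$ destabilization after exchange moves amounts to classifying non-destabilizable $\xi_-$-classes of the mirror knot --- a problem of exactly the kind you are trying to solve. No finiteness result of the sort you invoke appears in~\cite{distinguishing}. This is precisely why the paper does not attempt to re-prove completeness: it cites~\cite{chong2013} for (a)~completeness of the list~\eqref{list1-eq}, (b)~the coincidences $7_6^{k+}=-\mu(7_6^{k+})$, $k=1,2$, and (c)~the rotation numbers, and then reduces the whole proposition to just two new inequalities, $7_6^{1+}\ne7_6^{2+}$ and $7_6^{3+}\ne-\mu(7_6^{3+})$ (these, combined with~(b), already force all four classes $7_6^{1+}$, $7_6^{2+}$, $7_6^{3+}$, $-\mu(7_6^{3+})$ to be pairwise distinct).

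The second gap is that ``the distinguishing invariant of~\cite{distinguishing}'' is not an invariant one can evaluate on a pair of $\xi_+$-classes. What that method provides is a criterion of the form: if $\mathscr L_+(R)=\mathscr L_+(R')$ \emph{and} $\mathscr L_-(R)=\mathscr L_-(R')$, then~$R'$ is exchange-equivalent to the boundary of a proper realization of one of finitely many dividing configurations on a compatible Seifert surface. To separate two $\xi_+$-classes you must therefore first manufacture representatives sharing the \emph{same} $\xi_-$-class; this is the paper's key step, the auxiliary diagram~$R_6$ with $\mathscr L_+(R_6)=7_6^{2+}$ and $\mathscr L_-(R_6)=\mathscr L_-(R_1)$, so that the hypothetical equality $7_6^{1+}=7_6^{2+}$ places $R_1$ and~$R_6$ under the hypotheses of the criterion. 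One then still needs the compatible Seifert surface~$\Pi_1$, the $\mathbb Z_2$ symmetry~$\sigma$ of~$7_6$ to obtain a $-$-representative set $\{\delta_1^-,\sigma(\delta_1^-)\}$ of dividing configurations, and the exhaustive search of realizations of the dividing code, whose boundaries turn out to be $R_1$, $-\mu(R_1)$, $R_5$, $-\mu(R_5)$, none exchange-equivalent to~$R_6$ --- a contradiction. Your plan of ``applying the invariant to every pair not separated by classical invariants'' skips the auxiliary-diagram construction, the compatibility ($\tb$) conditions, and the symmetry analysis, without which the method of~\cite{distinguishing} simply does not apply.
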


\begin{proof}
It established in~\cite{chong2013} that:
\def\theenumi{\alph{enumi}}
\begin{enumerate}
\item
the list~\eqref{list1-eq} is complete;
\item
$7_6^{1+}=-\mu(7_6^{1+})$, $7_6^{2+}=-\mu(7_6^{2+})$;
\item
each of of~$7_6^{1+}$, $7_6^{2+}$, and~$7_6^{3+}$ has rotation number~$1$,
hence
$$\bigl\{7_6^{1+},7_6^{2+},7_6^{3+},-\mu(7_6^{3+})\bigr\}\cap\bigl\{-7_6^{1+},-7_6^{2+},-7_6^{3+},\mu(7_6^{3+})\bigr\}=\varnothing;$$
\end{enumerate}
It is conjectured but remained unsettled in~\cite{chong2013} that the classes~$7_6^{1+}$,
$7_6^{2+}$, $7_6^{3+}$, and~$-\mu(7_6^{3+})$ are pairwise distinct. To prove this, it suffices
to establish the following two facts:
\begin{enumerate}
\setcounter{enumi}{3}
\item
$7_6^{1+}\ne7_6^{2+}$ and
\item
$7_6^{3+}\ne-\mu(7_6^{3+})$.
\end{enumerate}
In the proof, we use the diagrams~$R_1$--$R_8$ shown in Figure~\ref{7_6-diagrams-fig}.
\begin{figure}[ht]
\begin{tabular}{ccccccc}
\includegraphics[scale=.2]{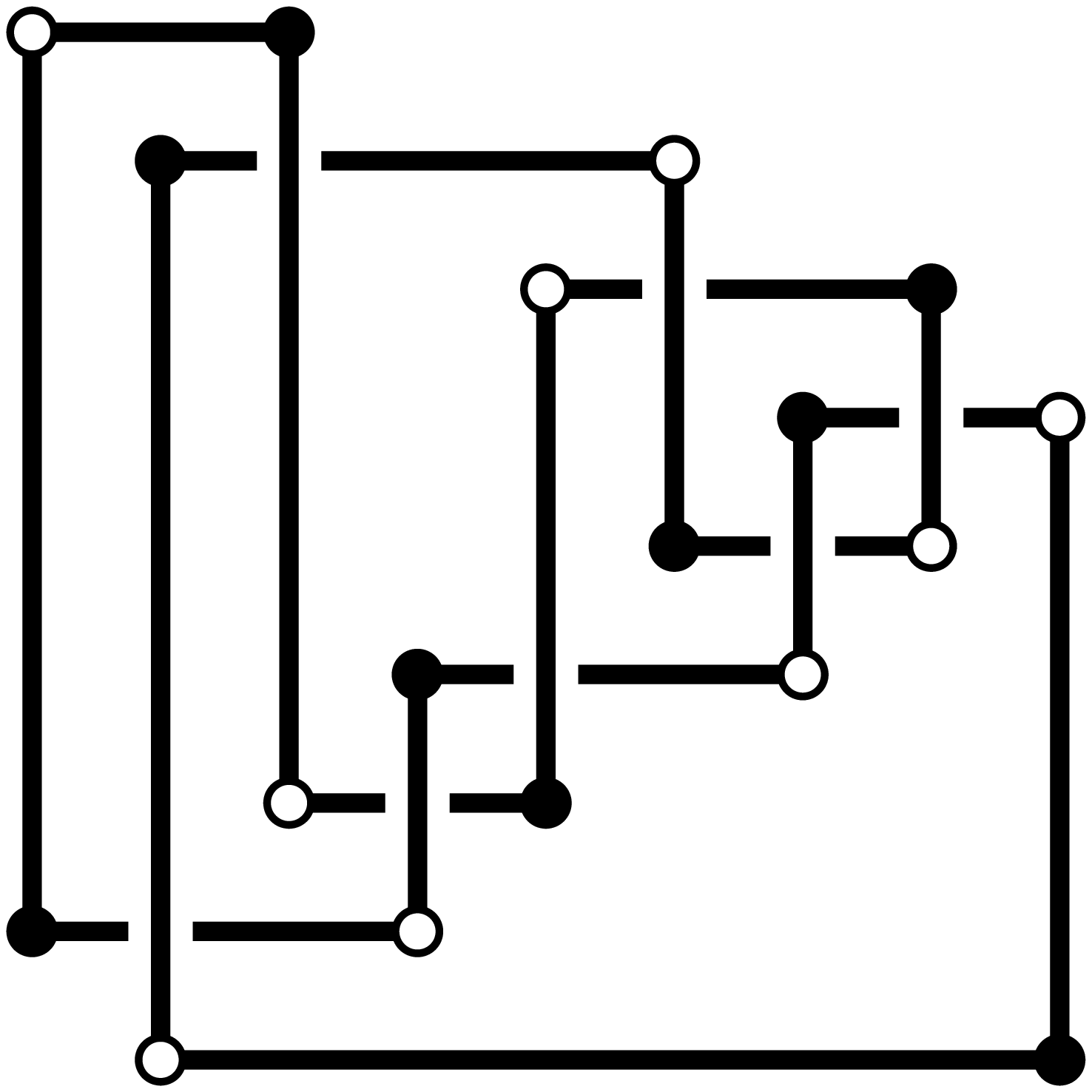}&&
\includegraphics[scale=.2]{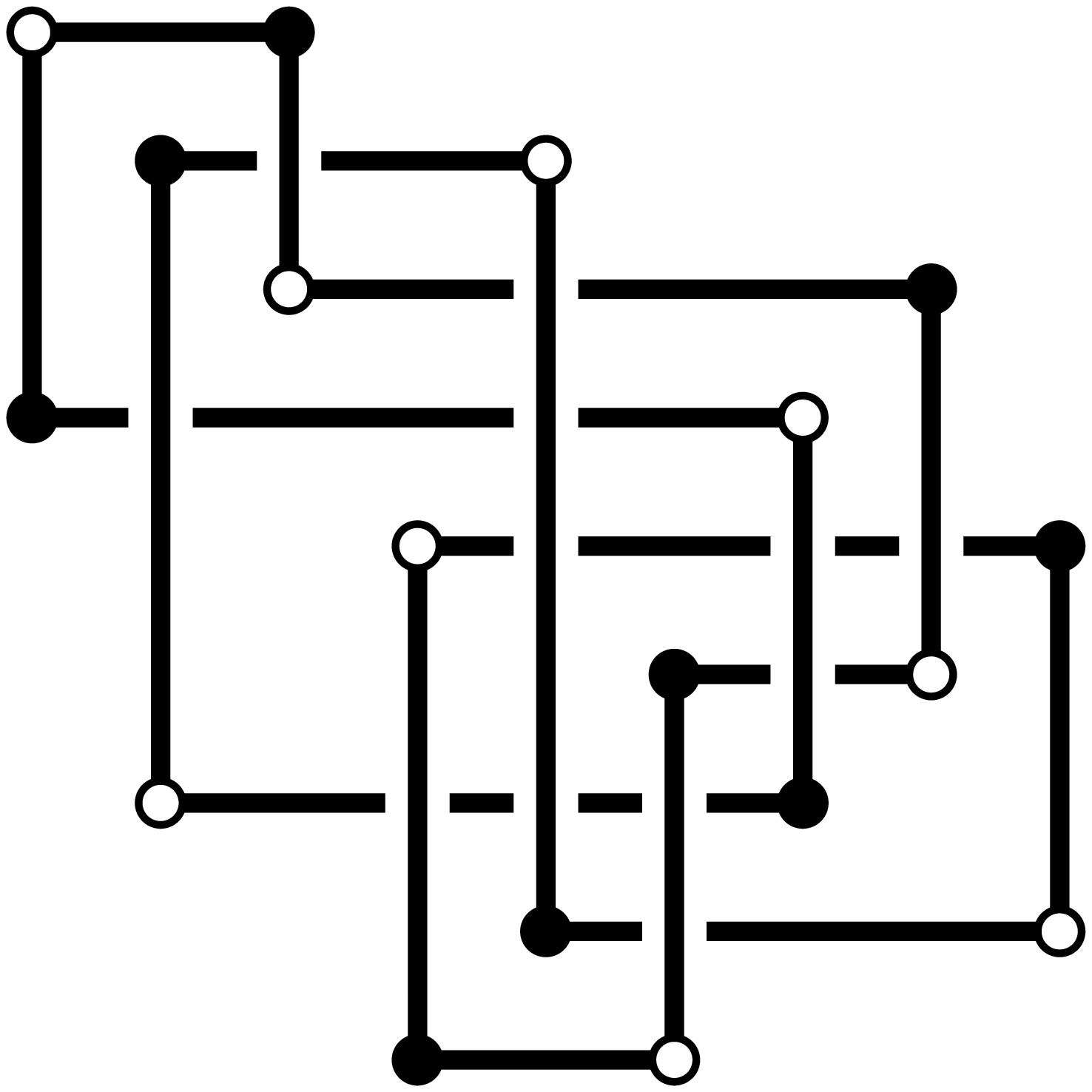}&&
\includegraphics[scale=.2]{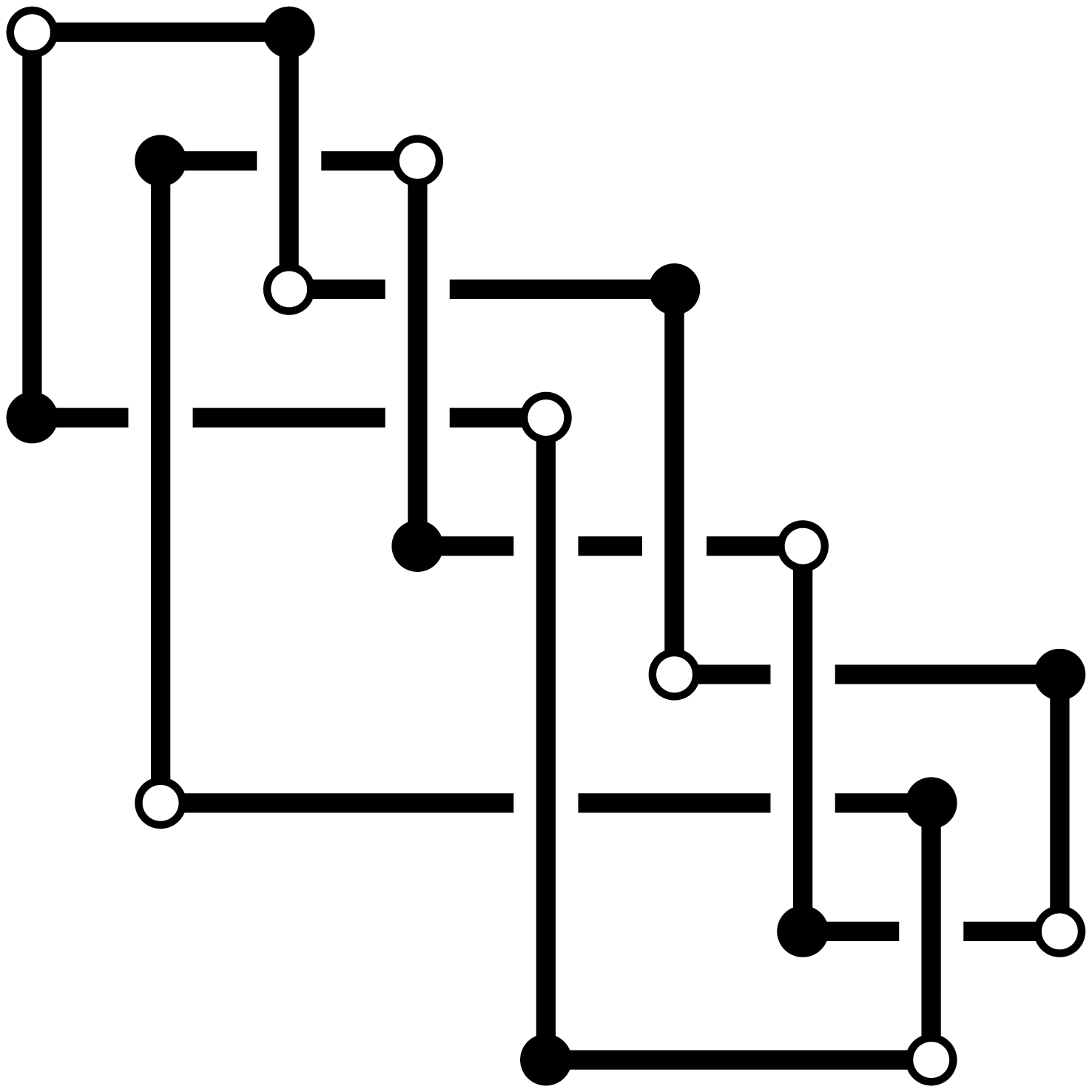}&&
\includegraphics[scale=.2]{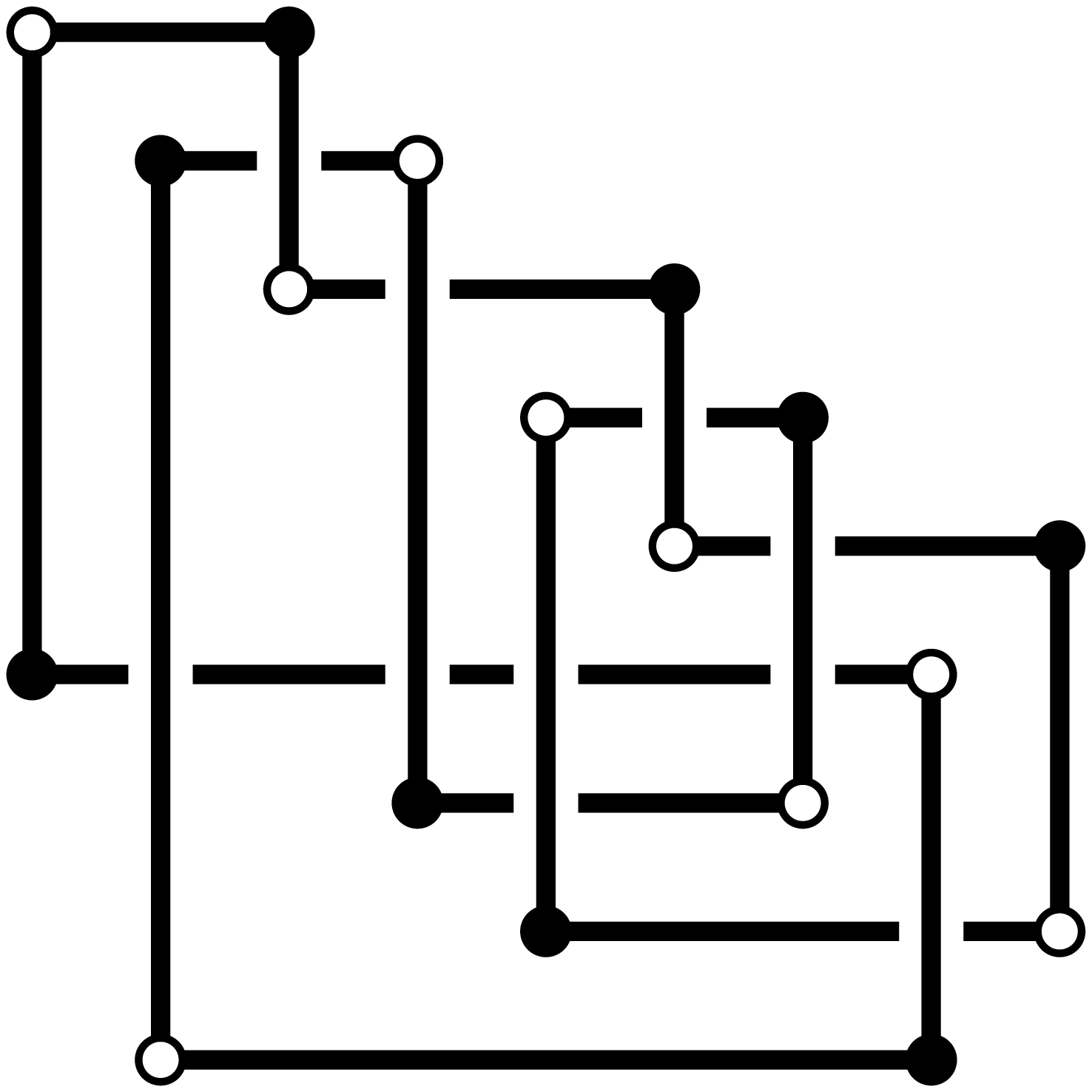}
\\
$R_1$&&$R_2$&&$R_3$&&$R_4$
\\[4mm]
\includegraphics[scale=.2]{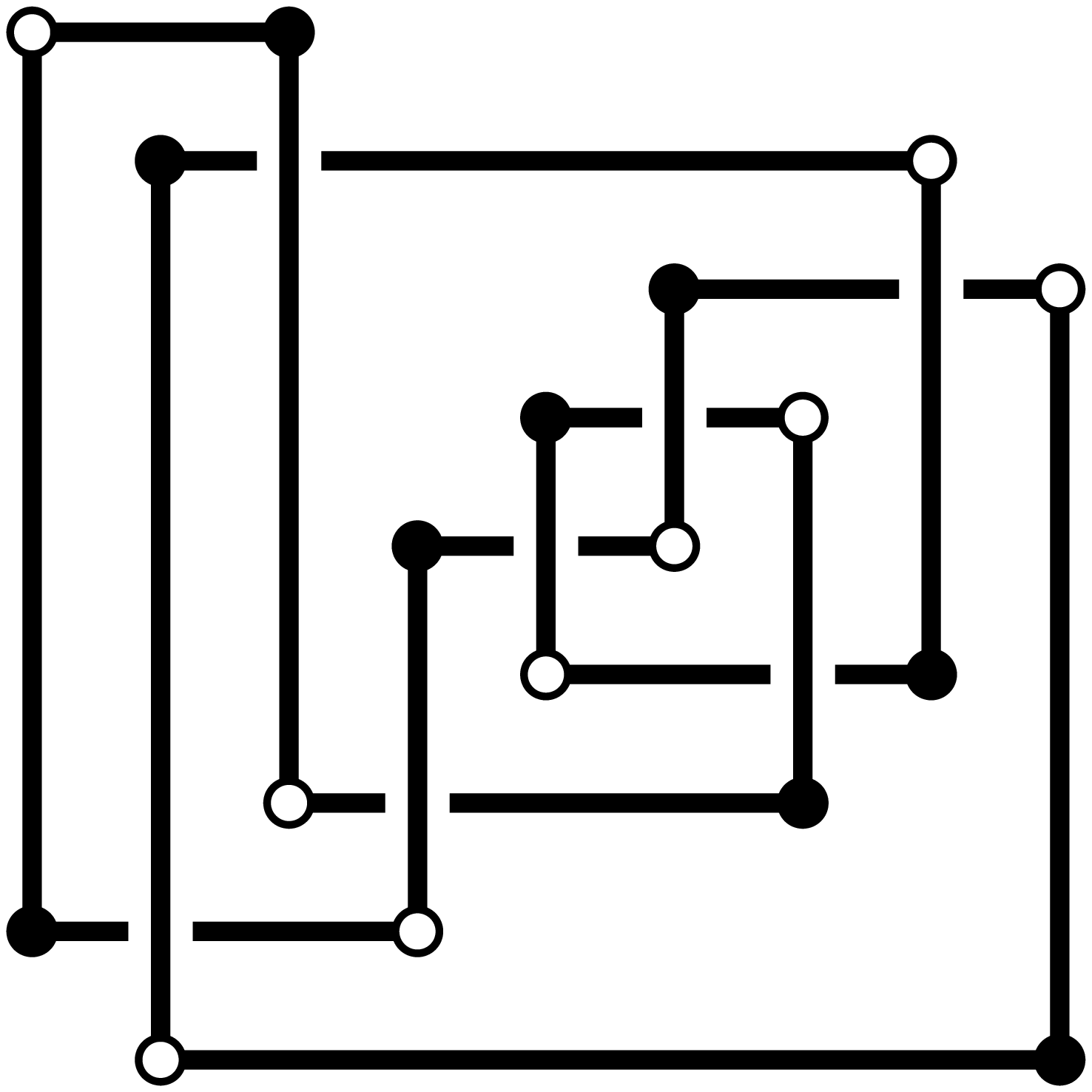}&&
\includegraphics[scale=.2]{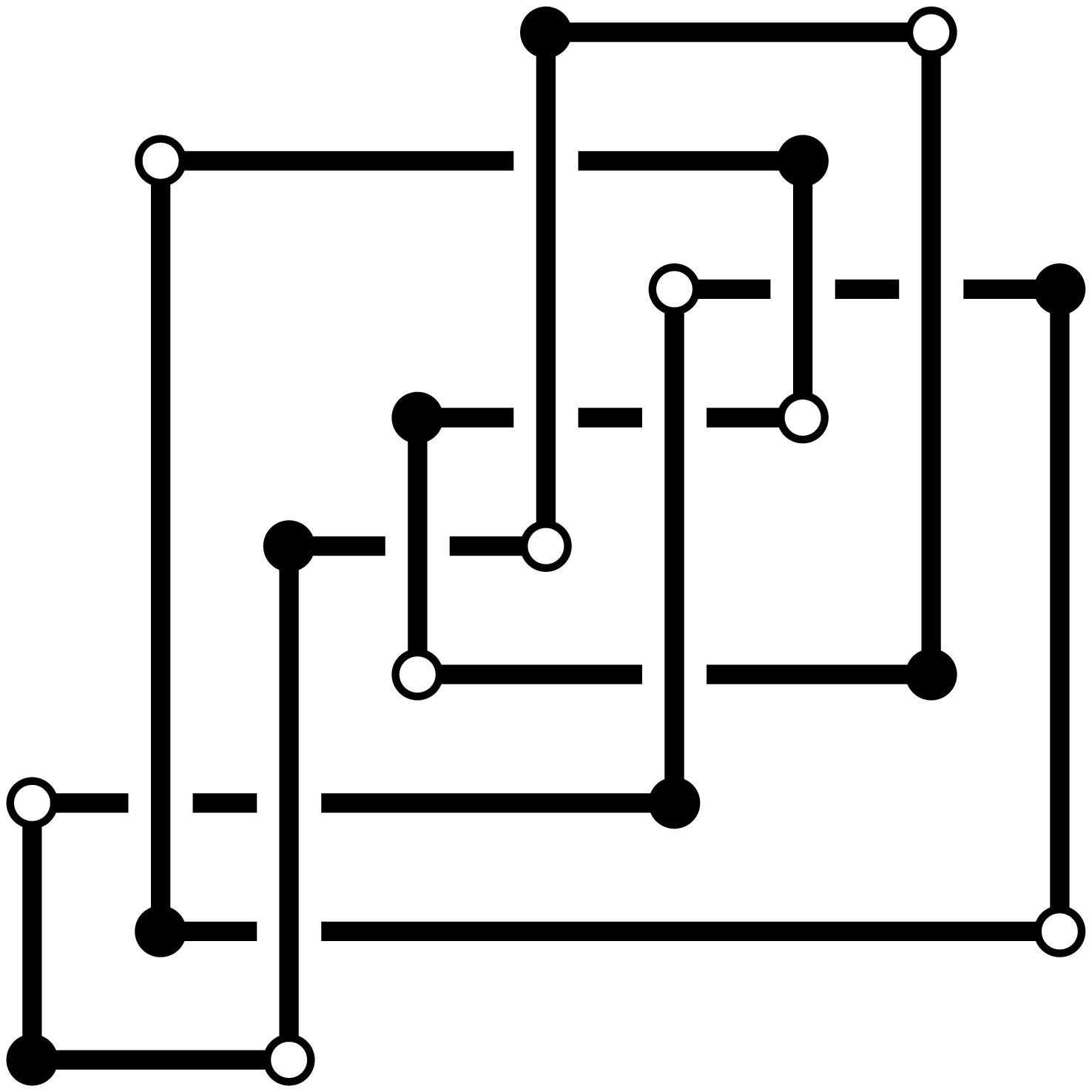}&&
\includegraphics[scale=.2]{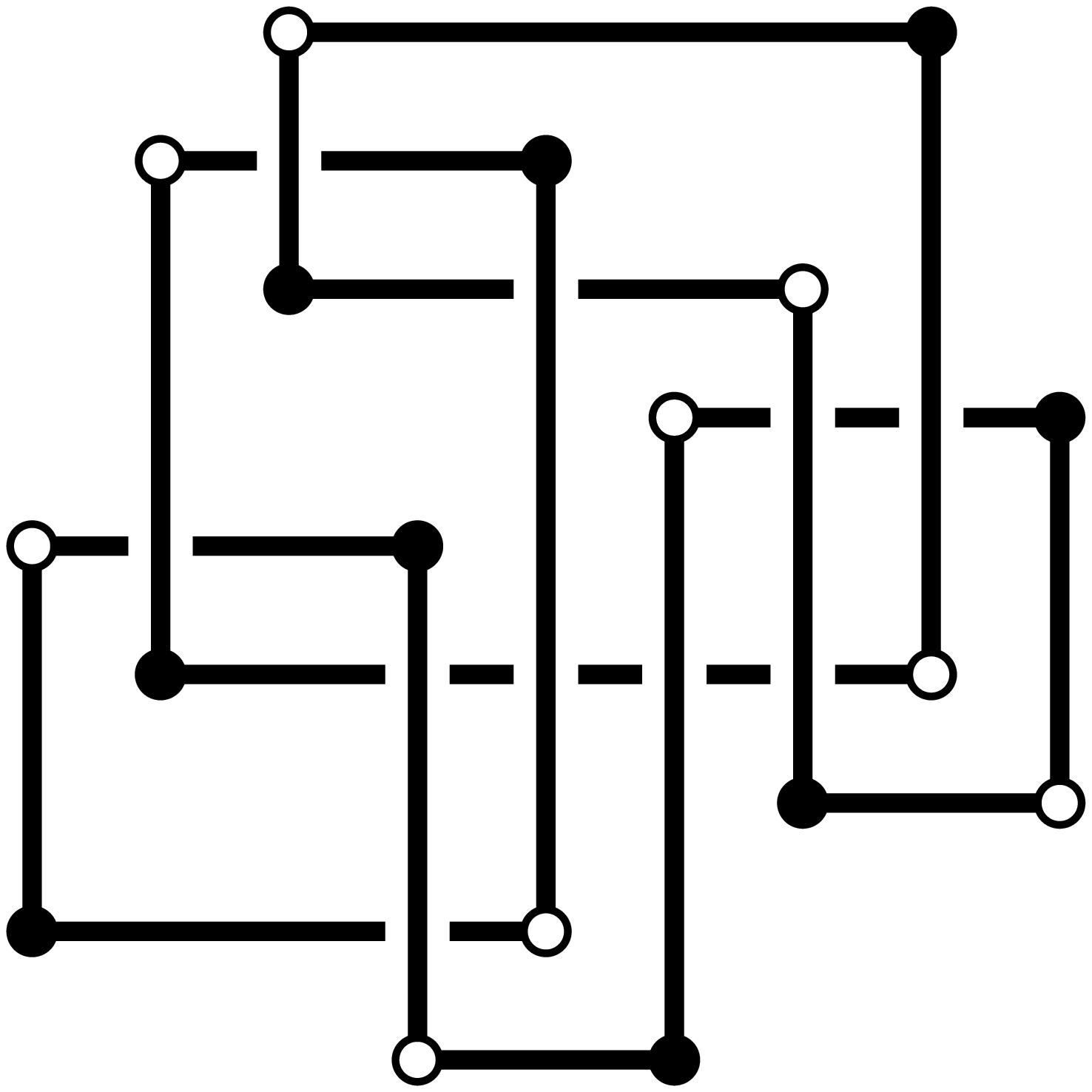}&&
\includegraphics[scale=.2]{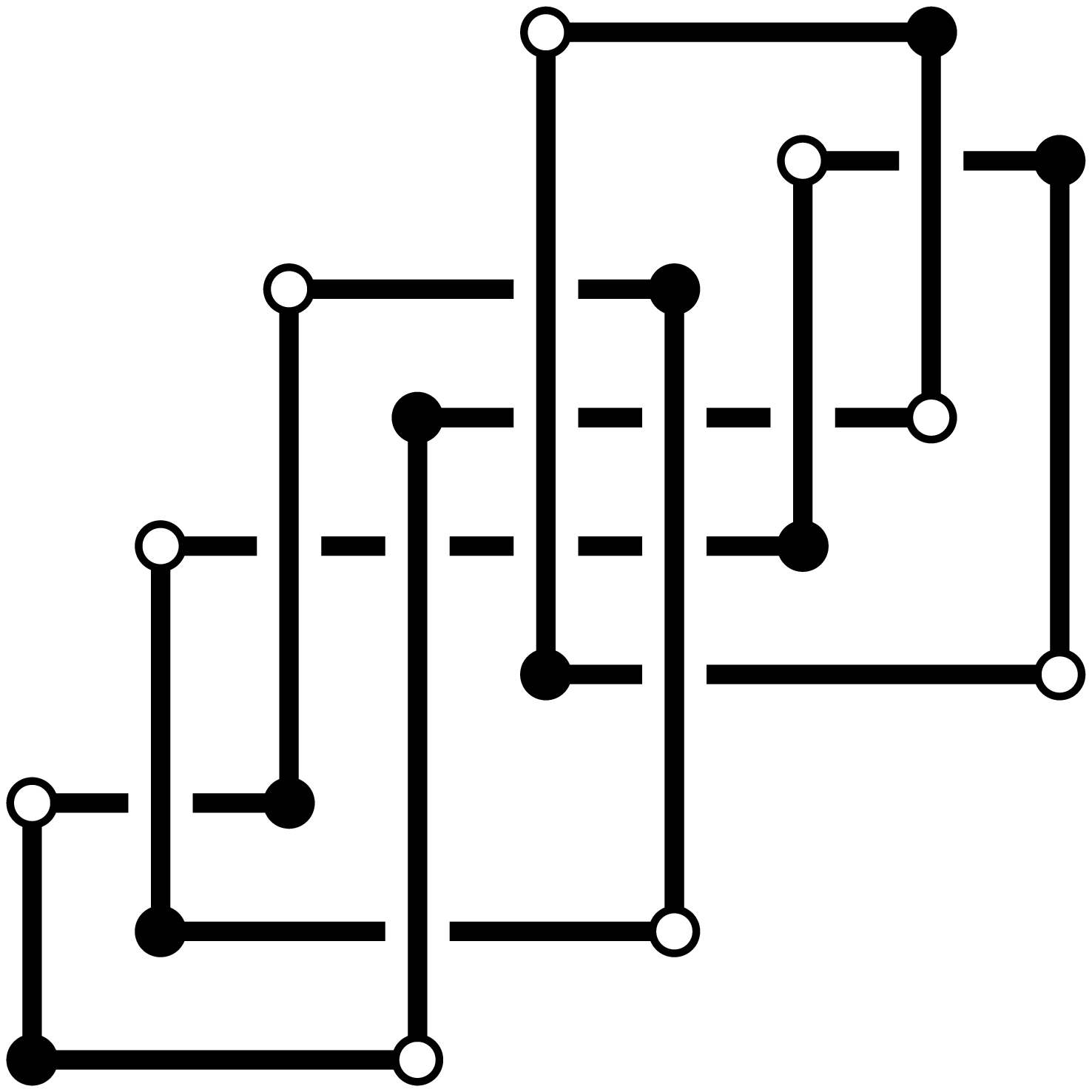}
\\
$R_5$&&$R_6$&&$R_7$&&$R_8$
\end{tabular}
\caption{Rectangular diagrams representing the knot~$7_6$ (or its mirror image). Black
vertices are positive, and white ones are negative}\label{7_6-diagrams-fig}
\end{figure}
To prove each of the statements~(d) and~(e) we follow the lines of the proof of~\cite[Proposition~2.3]{distinguishing}.
Similarly to the~$6_2$ case, the orientation-preserving symmetry group
of the knot~$7_6$ is~$\mathbb Z_2$ (see~\cite{ks92,sak90}),
we denote by~$\sigma$ a self-homeomorphism of~$\mathbb S^3$ representing
the only non-trivial element of this group.
The automorphism of the fundamental group of~$\mathbb S^3\setminus\widehat R_1$
induced by the restriction of~$\sigma$ to~$\mathbb S^3\setminus\widehat R_1$ is denoted by~$\sigma_*$.
(This automorphism is defined up to an internal one. We will make a concrete choice below.)

Like~$6_2$, the knot~$7_6$ is fibered and has genus two~\cite{stal61,mur63}.

\smallskip
\noindent\emph{Proof of~(d)}.
One can immediately see from Figures~\ref{7_6-leg-fig} and~\ref{7_6-diagrams-fig} that
\begin{equation}\label{r1=761+eq}
\mathscr L_+(R_1)=7_6^{1+}\quad and\quad\mathscr L_+(R_2)=7_6^{2+}.
\end{equation}

It is a direct check that~$\mathscr E\bigl(S_{\overrightarrow{\mathrm{II}}}(R_1)\bigr)=\mathscr E\bigl(S_{\overrightarrow{\mathrm{II}}}(R_6)\bigr)$
and~$\mathscr E\bigl(S_{\overleftarrow{\mathrm I}}(R_2)\bigr)=\mathscr E\bigl(S_{\overleftarrow{\mathrm I}}(R_6)\bigr)$, which
implies
\begin{equation}\label{r6=762+eq}
\mathscr L_+(R_6)=7_6^{2+},\quad\mathscr L_-(R_6)=\mathscr L_-(R_1)
\end{equation}
(the class~$\mathscr L_-(R_1)$ coincides with $7_6^{3-}$, which does not play a role here).

One also finds that
$$\tb_+(R_1)=\tb_+(R_2)=\tb_+(R_6)=-8,\quad
\tb_-(R_1)=\tb_-(R_2)=\tb_-(R_6)=-1,$$
hence, any Seifert surface for the knot~$7_6$ is $+$-compatible and $-$-compatible with any of~$R_1$, $R_2$, and~$R_6$
(see \cite[Definition~2.6]{distinguishing}).

Now we choose a Seifert surface for~$\widehat R_1$. Our choice is shown, in the rectangular form,
in Figure~\ref{seifert1-fig} together with the torus projections of the chosen generators of the fundamental
group of the surface.
\begin{figure}[ht]
\includegraphics[scale=.65]{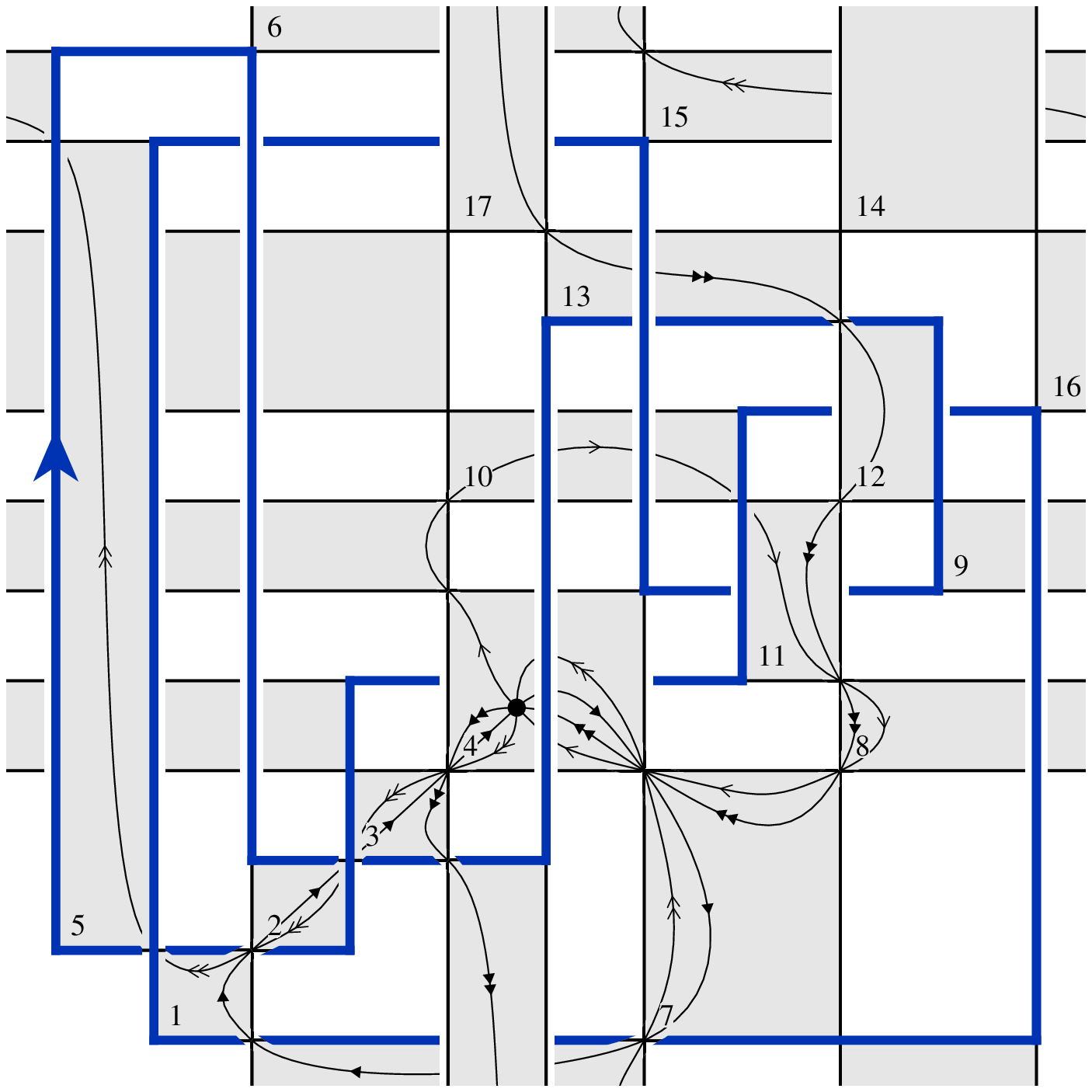}\hskip1cm\raisebox{80pt}{\includegraphics[scale=0.7]{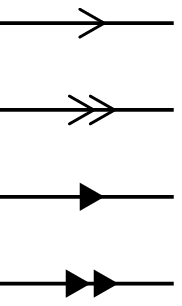}\put(-32,75){legend:}%
\put(3,60){$x_1$}\put(3,42.3){$x_2$}\put(3,24.6){$x_3$}\put(3,6.9){$x_4$}}
\caption{Rectangular diagram~$\Pi_1$ with~$\partial\Pi_1=R_1$}\label{seifert1-fig}
\end{figure}
It is a direct check that~$\widehat\Pi_1$ is orientable and has genus two.
One can also see that the homotopy class of~$\partial\widehat\Pi_1=\widehat R_1$
in~$\widehat\Pi_1$ is presented by the element
\begin{equation}\label{xxxxxxxx-eq}
x_1x_2x_3x_4x_1^{-1}x_2^{-1}x_3^{-1}x_4^{-1}.
\end{equation}

The generators~$x_i$ are chosen so as to have
\begin{equation}\label{sigma1-eq}
\sigma_*(x_i)=x_i^{-1},\quad i=1,2,3,4,
\end{equation}
which will be seen in a moment. They are also shown in Figure~\ref{loops1-fig} with
\begin{figure}[ht]
\includegraphics[scale=.8]{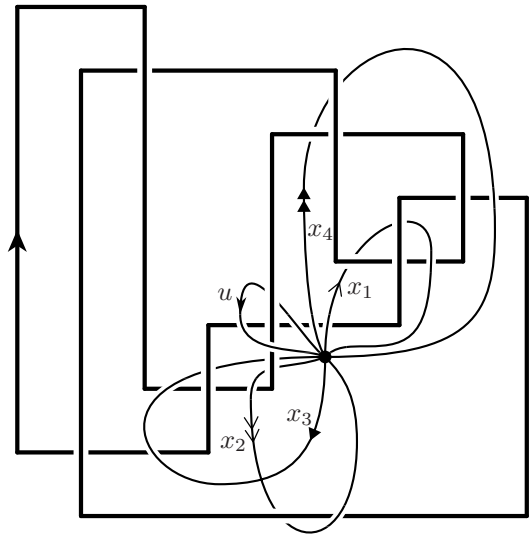}\put(-129,94){$u$}\put(-80,96){$x_1$}\put(-95,118){$x_4$}\put(-128,38){$x_2$}\put(-103,48){$x_3$}
\caption{The generators~$x_1$, $x_2$, $x_3$, $x_4$, and~$u$ of~$\pi_1(\mathbb S^3\setminus\widehat R_1)$}\label{loops1-fig}
\end{figure}
an additional generator~$u$ such that
\begin{equation}\label{sigma2-eq}
\sigma_*(u)=x_2^{-1}u.
\end{equation}
One can verify, using the Wirtinger presentation of~$\pi_1(\mathbb S^3\setminus
\widehat R_1)$, that~$x_1$, $x_2$, $x_3$, $x_4$, and~$u$ generate the fundamental group of~$\widehat R_1$,
and the following list can be taken for a set of defining relations:
$$ux_1u^{-1}=x_4^2x_2^{-1},\quad ux_2u^{-1}=x_2x_4^{-1}x_2x_3x_2x_4^{-1},\quad ux_3u^{-1}=x_4x_2^{-1},\quad ux_4u^{-1}=x_4x_1^{-1}x_4x_2^{-1}.$$
These relations are clearly preserved by the substitution~$x_i\mapsto x_i^{-1}$ ($i=1,2,3,4$), $u\mapsto x_2^{-1}u$, which, therefore,
defines an automorphism of~$\pi_1(\mathbb S^3\setminus\widehat R_1)$. This automorphism is an involution that
preserves the conjugacy class of the element~\eqref{xxxxxxxx-eq} and the homology class~$[u]\in H_1(\mathbb S^3\setminus\widehat R_1;\mathbb Z)$.
This implies that this automorphism is induced by a self-homeomorphism
of~$\mathbb S^3\setminus\widehat R_1$ taking~$\widehat\Pi_1\setminus\widehat R_1$
to itself and preserving the orientations of~$\mathbb S^3$ and~$\widehat\Pi_1$.
Such a homeomorphism must be isotopic to~$\sigma$ (restricted to~$\mathbb S^3\setminus\widehat R_1$),
which verifies~\eqref{sigma1-eq} and~\eqref{sigma2-eq}.

This means, that~$\sigma$ can be chosen so as to have~$\sigma(\widehat\Pi_1)=\widehat\Pi_1$ and $\sigma^2=\mathrm{id}$.
We may also assume that, for each~$i=1,2,3,4$, the homeomorphism~$\sigma$ takes a loop representing~$x_i$
to the inverse of itself. We cut~$\widehat\Pi_1$ along these loops
to get an octagon with a hole. Shown in Figure~\ref{nuts1-fig} on the left
\begin{figure}[ht]
\includegraphics[scale=.5]{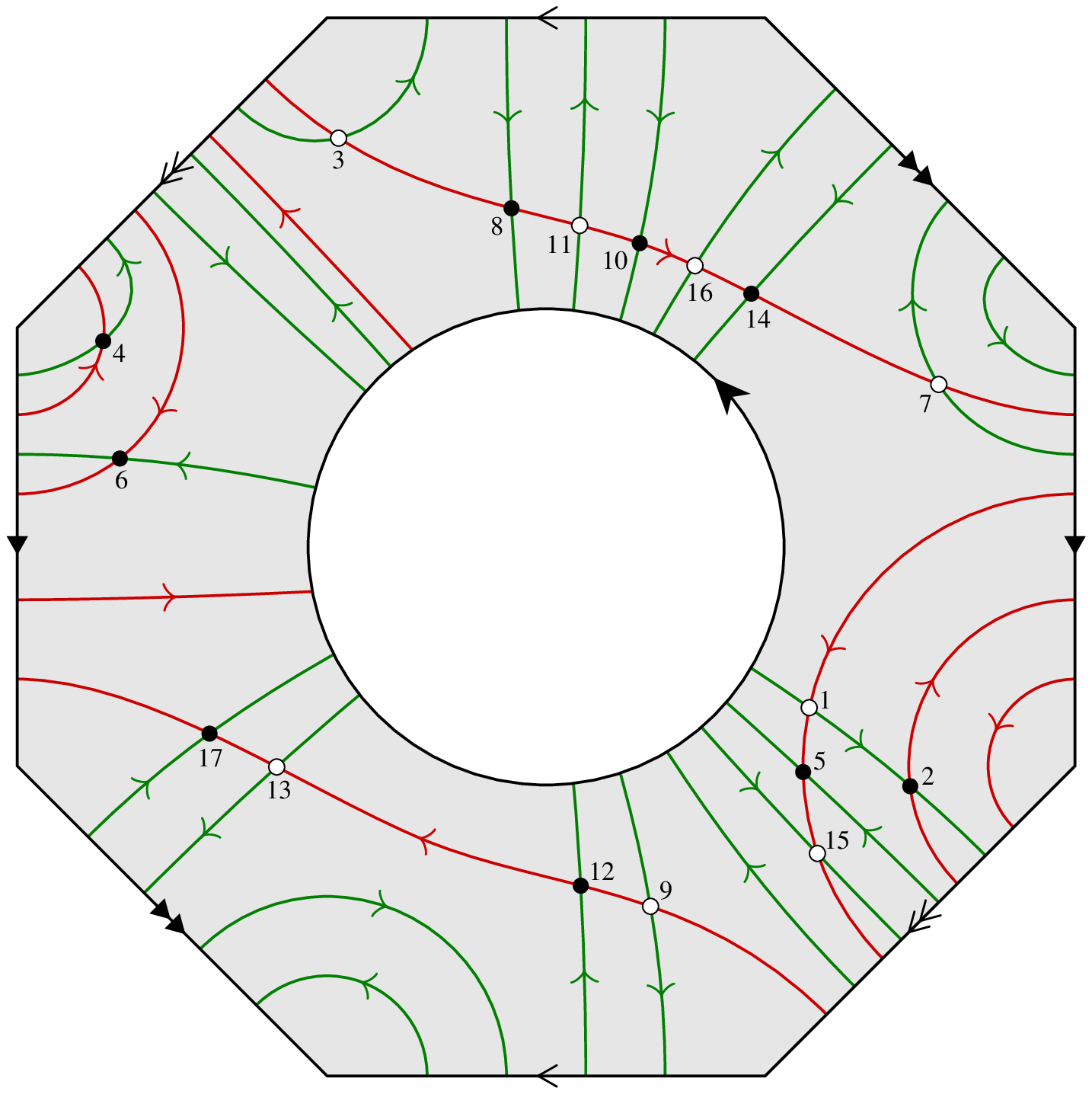}
\includegraphics[scale=.5]{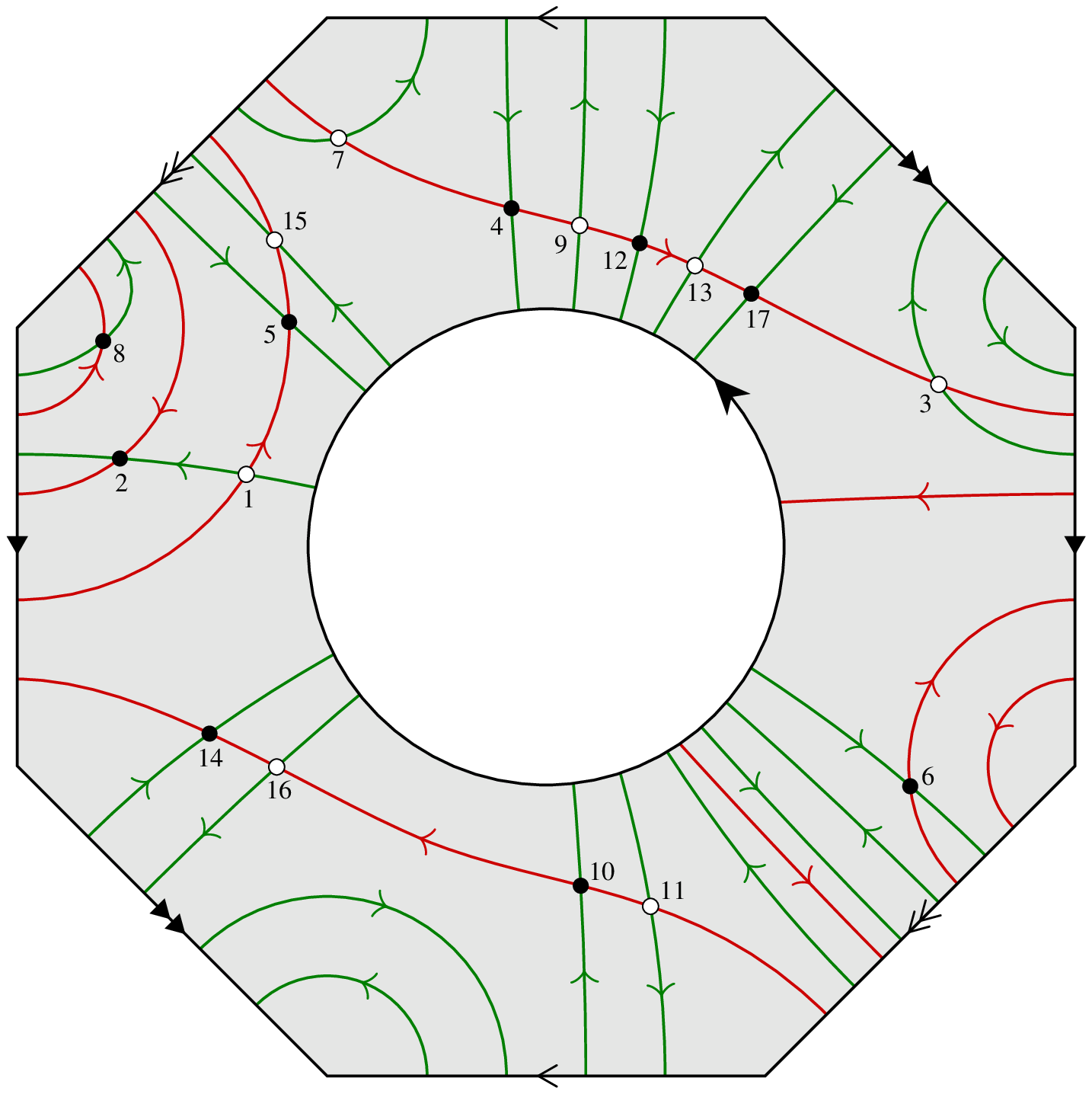}
\caption{Dividing configurations~$(\delta_1^+,\delta_1^-)$ and~$(\delta_1^+,\sigma(\delta_1^-))$}\label{nuts1-fig}
\end{figure}
is a canonic dividing configuration, which we denote by~$(\delta_1^+,\delta_1^-)$, on the cut surface,
with~$\delta_1^+$ shown in green and~$\delta_1^-$ in red.
The right picture in Figure~\ref{nuts1-fig} shows the dividing configuration~$(\delta_1^+,\sigma(\delta_1^-))$
(for a specific choice of~$\sigma$). One can see that both dividing configurations have the same
dividing code, which is
\begin{multline}\nonumber
\{(1,2,3,4),(5),(6,7,8),(9,10),(11,12),(13,14),(15),(16,17)\},\\
\nonumber
\{(1,5,15,6,1),(2),(3,8,11,10,16,14,7,4,9,12,13,17,3)\}.
\end{multline}

The set~$\{\delta_1^-,\sigma(\delta_1^-)\}$ is $-$-representative for~$R_1$,
and hence, for~$R_6$ (see~\cite[Definition~2.8]{distinguishing}).
In view of~\eqref{r1=761+eq} and~\eqref{r6=762+eq}, by~\cite[Theorem~2.1 and Corollary~2.1]{distinguishing}
the equality~$7_6^{1+}=7_6^{2+}$ would imply the existence of a proper realization~$(\Pi,\phi)$ of~$(\delta_1^+,\delta_1^-)$
or~$(\delta_1^+,\sigma(\delta_1^-))$
such that~$\partial\Pi$ is exchange-equivalent to~$R_6$. Since~$(\delta_1^+,\delta_1^-)$ and~$(\delta_1^+,\sigma(\delta_1^-))$
are isomorphic, they have the same sets of realizations (if not requested to be proper).

An exhaustive search (using the script~\cite{dyn-script}) results in exactly four, up to combinatorial equivalence,
realizations~$(\Pi,\phi)$ of~$(\delta_1^+,\delta_1^-)$ such that~$\partial\Pi$ is a rectangular diagram representing a knot
of topological type~$7_6$.
These are shown in Figure~\ref{4realizations-fig}.
\begin{figure}[ht]
\includegraphics[scale=.5]{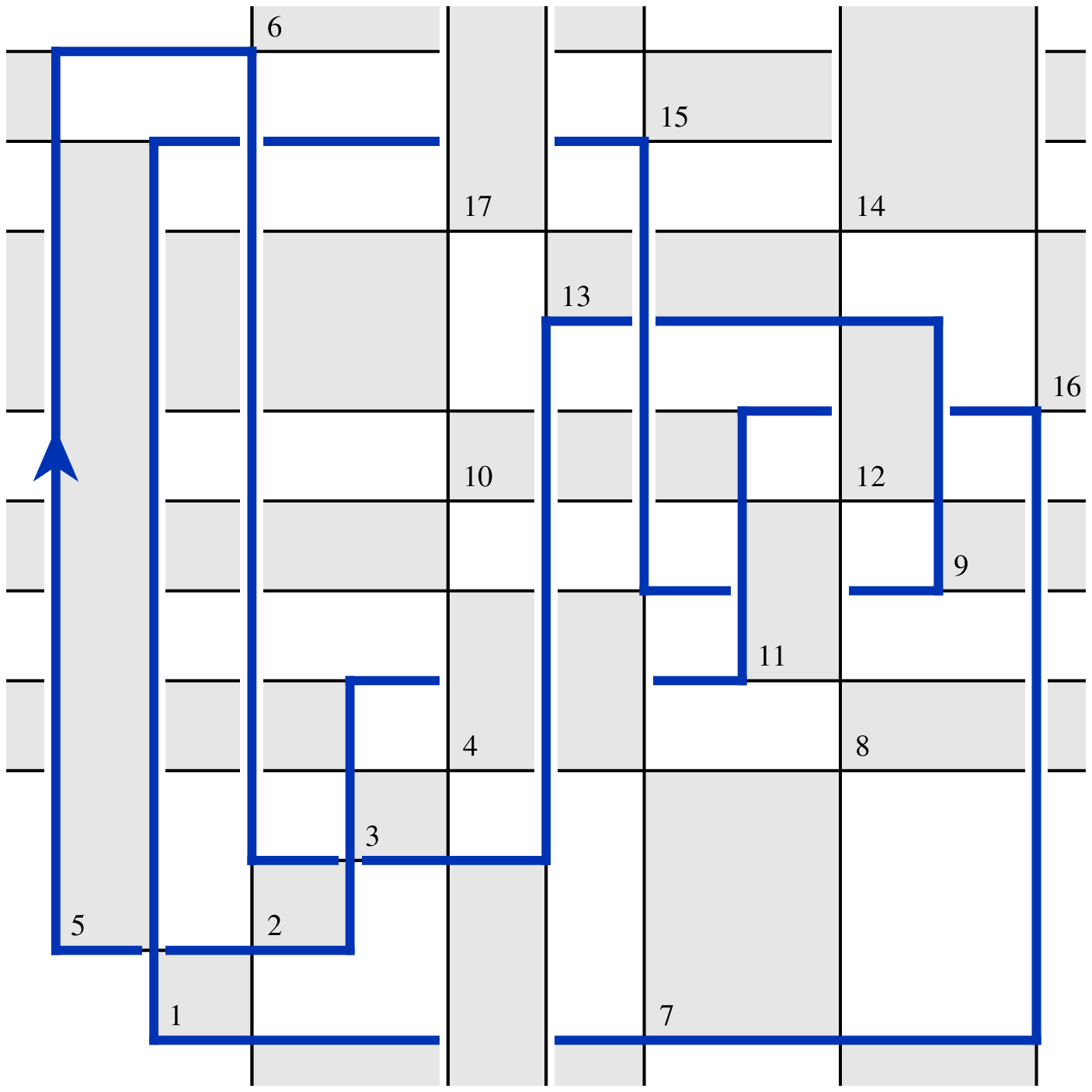}
\includegraphics[scale=.5]{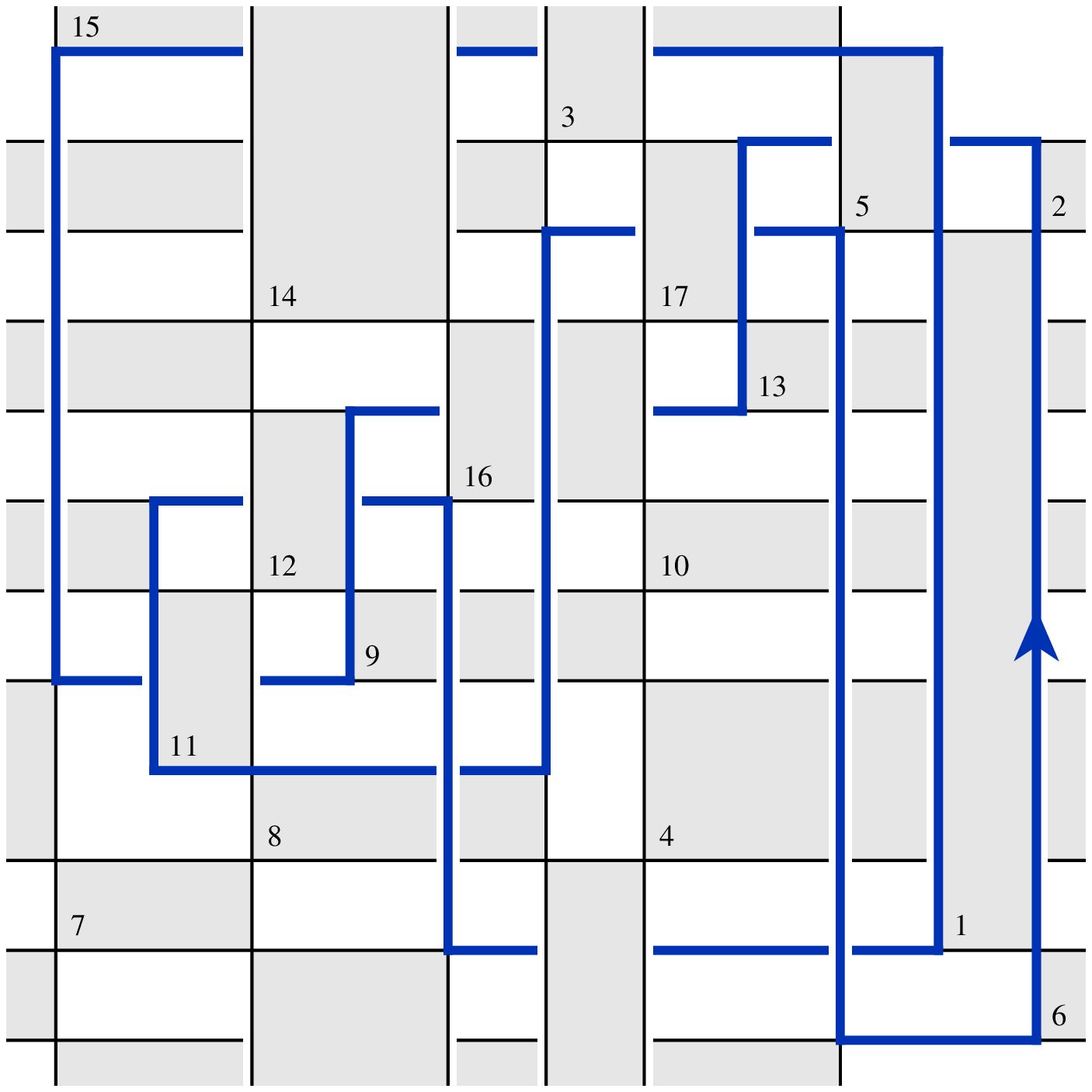}

\includegraphics[scale=.5]{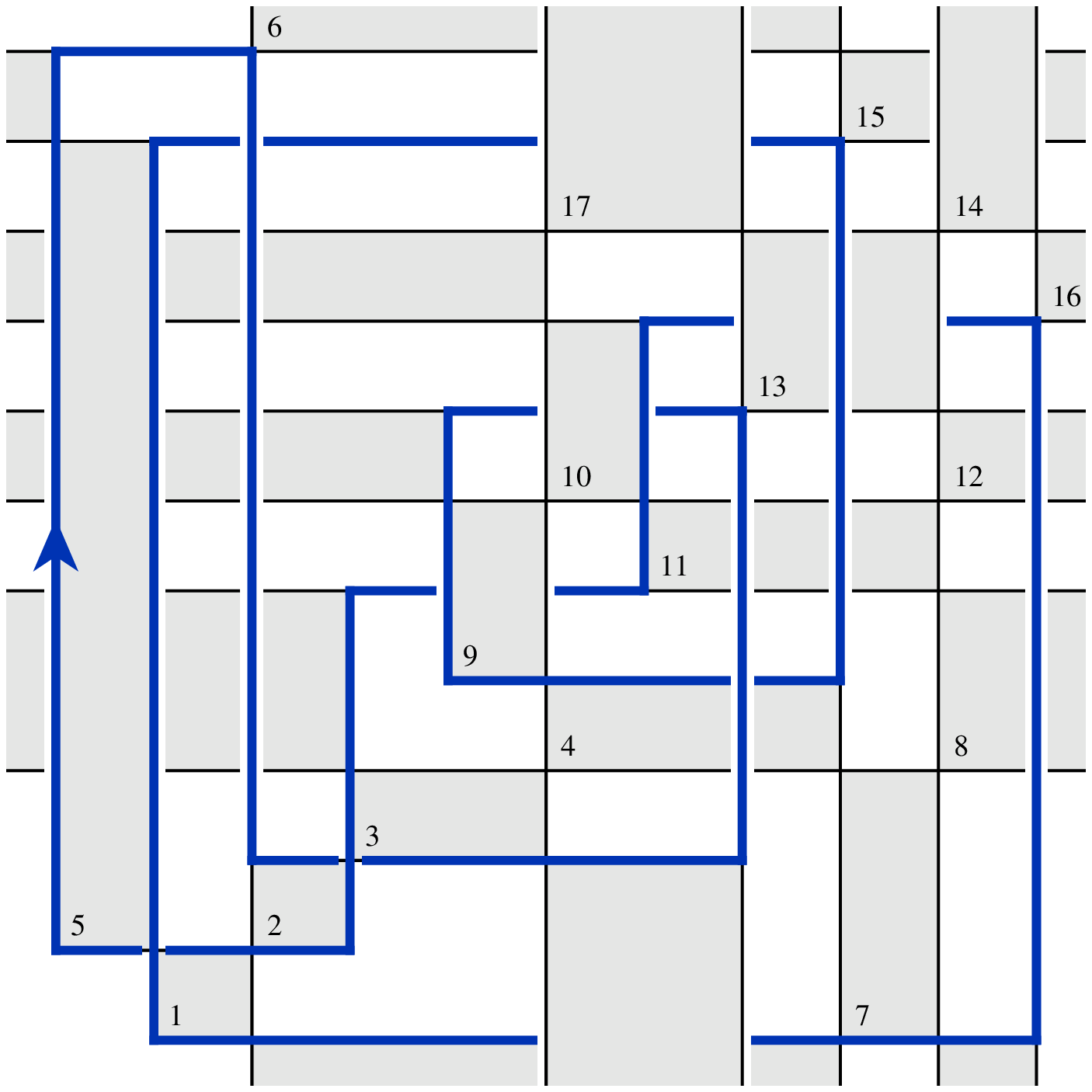}
\includegraphics[scale=.5]{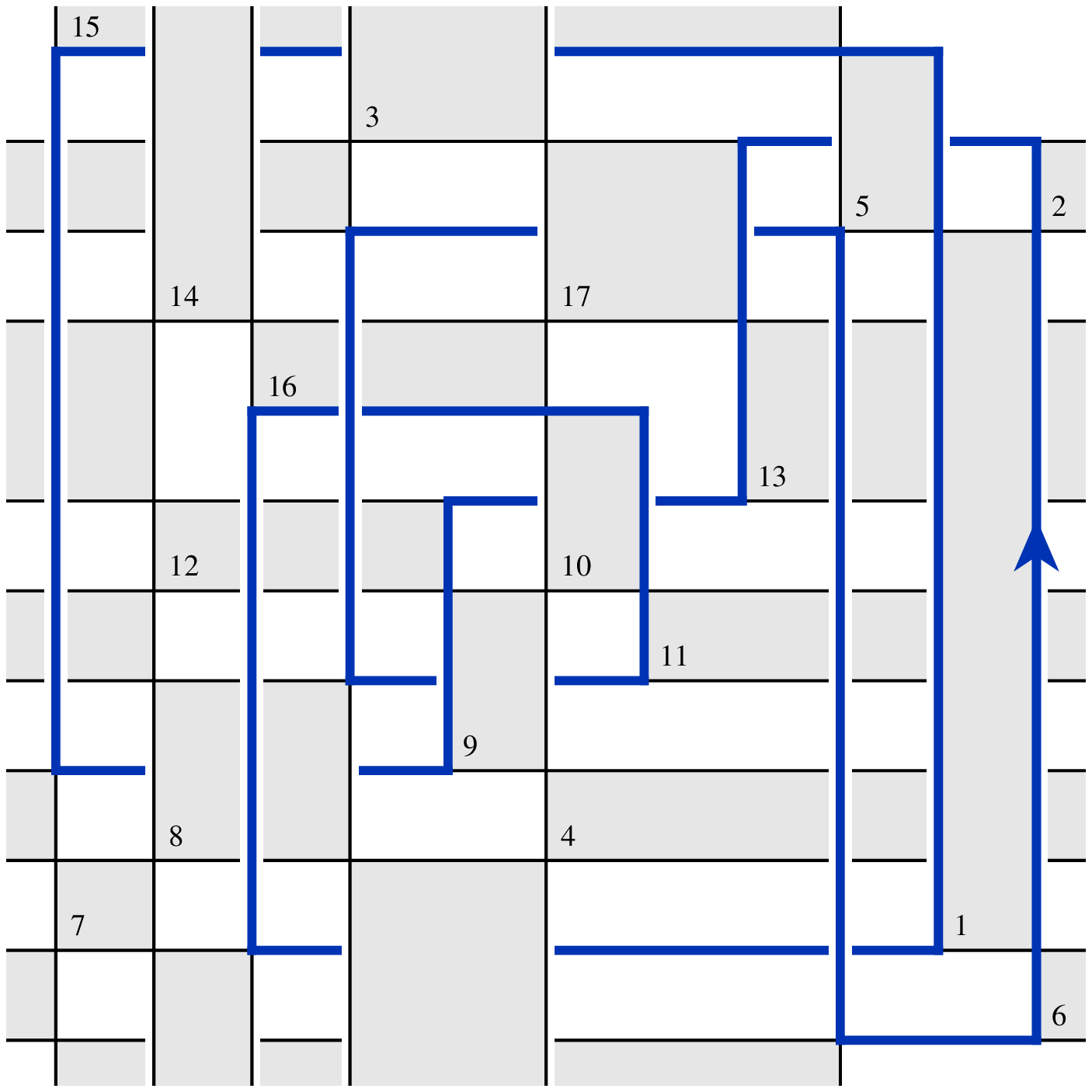}
\caption{All realizations~$\Pi$ of~$(\delta_1^+,\delta_1^-)$ with~$\partial\Pi$ representing the knot~$7_6$}\label{4realizations-fig}
\end{figure}
The boundaries of the obtained rectangular diagrams of a surface are~$R_1$, $-\mu(R_1)$, $R_5$,
and~$-\mu(R_5)$. None of these rectangular diagrams of a knot admits
a non-trivial exchange move, and none of them is combinatorially equivalent to~$R_6$.
Thus, $R_6\notin\mathscr E(R_1)\cup\mathscr E(R_5)\cup\mathscr E(-\mu(R_1))\cup\mathscr E(-\mu(R_5))$.
Therefore, $7_6^{1+}\ne7_6^{2+}$.

\begin{rema}
At a very premature stage of the work presented in~\cite{representability,distinguishing}
we expected that whenever rectangular diagrams of a knot~$R$, $R'$
are such that~$\mathscr L_+(R)=\mathscr L_+(R')$ and~$\mathscr L_-(R)=\mathscr L_-(R')$,
and~$\Pi$ is a rectangular diagram of a surface~$\Pi$ with~$\partial\Pi=R$
we must have another rectangular diagram of a surface~$\Pi'$ with~$\partial\Pi'=R'$
having the same dividing code as~$\Pi$ has. To test this expectation, for which we did not have
enough grounds, we
picked the first rectangular diagram~$R$ from~\cite{chong2013}
for which the data of~\cite{chong2013} implied~$\mathscr L_\pm(R)=\mathscr L_\pm(-\mu(R))$
and~$R\ne-\mu(R)$, and this diagram was the~$R_1$ in Figure~\ref{7_6-diagrams-fig} above.
We also constructed a rectangular diagram representing a Seifert surface for~$\widehat R_1$,
which was the~$\Pi_1$ shown in Figure~\ref{loops1-fig}. Then, after searching
all realizations of the dividing code of~$\Pi_1$ we were delighted to see among them
a diagram~$\Pi'$ with~$\partial\Pi'=-\mu(R)$ (which is the top right in Figure~\ref{4realizations-fig}).
This encouraged us to continue this work.

However, as we realized later, the existence of such~$\Pi'$ did not follow from our hypotheses,
and the confirmation of our expectation by this example was accidental
and occurred mainly to the fact that the dividing configurations~$(\delta_1^+,\delta_1^-)$
and~$(\delta_1^+,\sigma(\delta_1^-))$ were isomorphic
(another lucky circumstance was that the diagram~$R$, and hence~$-\mu(R)$,
did not admit any non-trivial exchange move). The point is that~$\Pi'$
is a \emph{proper} realization of~$(\delta_1^+,\sigma(\delta_1^-))$, but not of~$(\delta_1^+,\delta_1^-)$,
whereas our method does not say anything about the use of non-proper realizations
(they may be discarded).
\end{rema}

\noindent\emph{Proof of~(e).}
We follow exactly the same steps as in the proof of the part~(d), so we omit the details except for those
that are different in this case. We now use the Seifert surface
for~$\widehat R_7$ presented by rectangular diagram~$\Pi_2$
shown in Figure~\ref{seifert2-fig} together with new generators~$y_1,y_2,y_3,y_4$ of
\begin{figure}[ht]
\includegraphics[scale=.65]{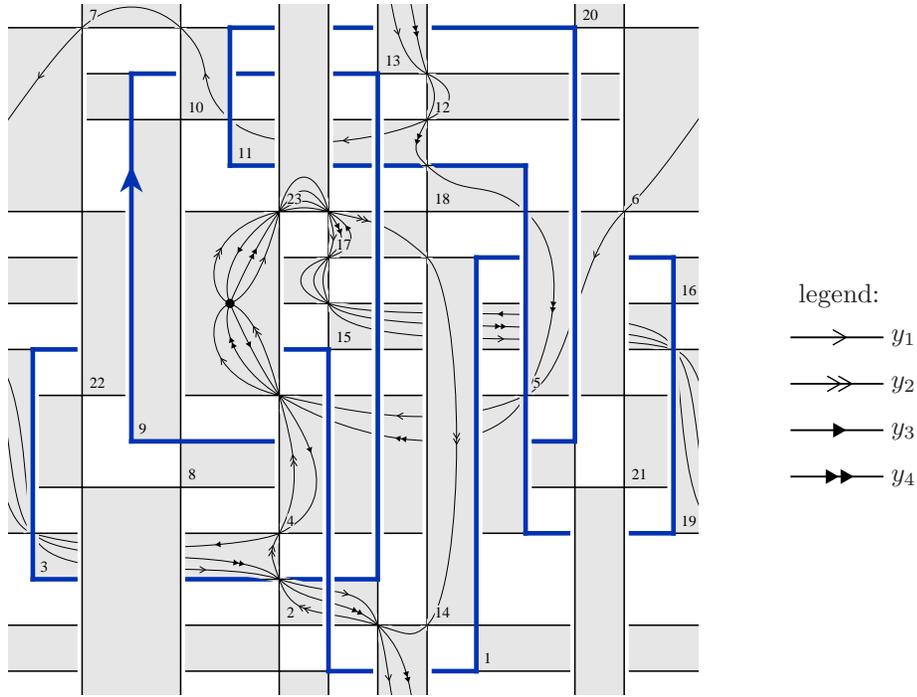}\hskip1cm\raisebox{80pt}{\includegraphics[scale=0.7]{legend.eps}\put(-32,75){legend:}%
\put(3,60){$y_1$}\put(3,42.3){$y_2$}\put(3,24.6){$y_3$}\put(3,6.9){$y_4$}}
\caption{Rectangular diagram~$\Pi_2$ with~$\partial\Pi_2=R_7$}\label{seifert2-fig}
\end{figure}
the fundamental group of~$\widehat\Pi_2$.
A complete set of generators of~$\pi_1(\mathbb S^3\setminus\widehat R_7)$ is shown in Figure~\ref{loops2-fig},
\begin{figure}[ht]
\includegraphics[scale=.8]{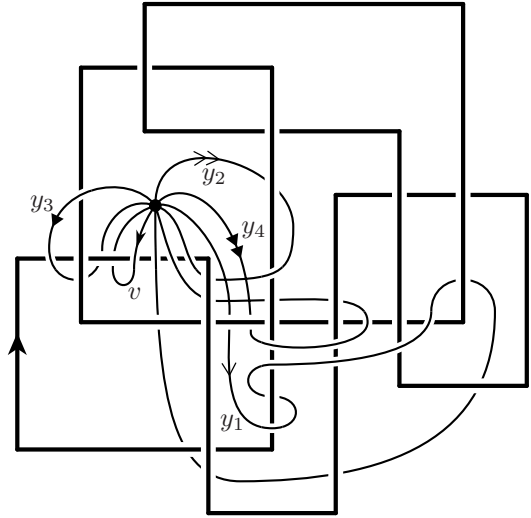}\put(-163,85){$v$}\put(-128,37){$y_1$}\put(-135,130){$y_2$}\put(-200,120){$y_3$}\put(-120,110){$y_4$}
\caption{The generators~$y_1$, $y_2$, $y_3$, $y_4$, and~$v$ of~$\pi_1(\mathbb S^3\setminus\widehat R_7)$}\label{loops2-fig}
\end{figure}
which can be used to verify the following defining relations:
$$v^{-1}y_1v=y_1y_4^{-1}y_1y_3,\quad v^{-1}y_2v=y_3^{-1}y_2y_3y_1y_3y_2,
\quad v^{-1}y_3v=y_2y_3,\quad v^{-1}y_4v=y_3^{-1}y_2^{-2}.$$
One can see from this that, for a smart choice of the involution~$\sigma$, we will have
$$\sigma_*(v)=vy_3^{-1},\quad\sigma_*(y_i)=y_i^{-1},\quad i=1,2,3,4.$$

We denote by~$(\delta_2^+,\delta_2^-)$ a canonic dividing configuration of~$\widehat\Pi_2$.
After cutting the surface along the loops~$y_i$, $i=1,2,3,4$, this configuration
looks as shown in Figure~\ref{nuts2-fig} on the left.
\begin{figure}[ht]
\includegraphics[scale=.5]{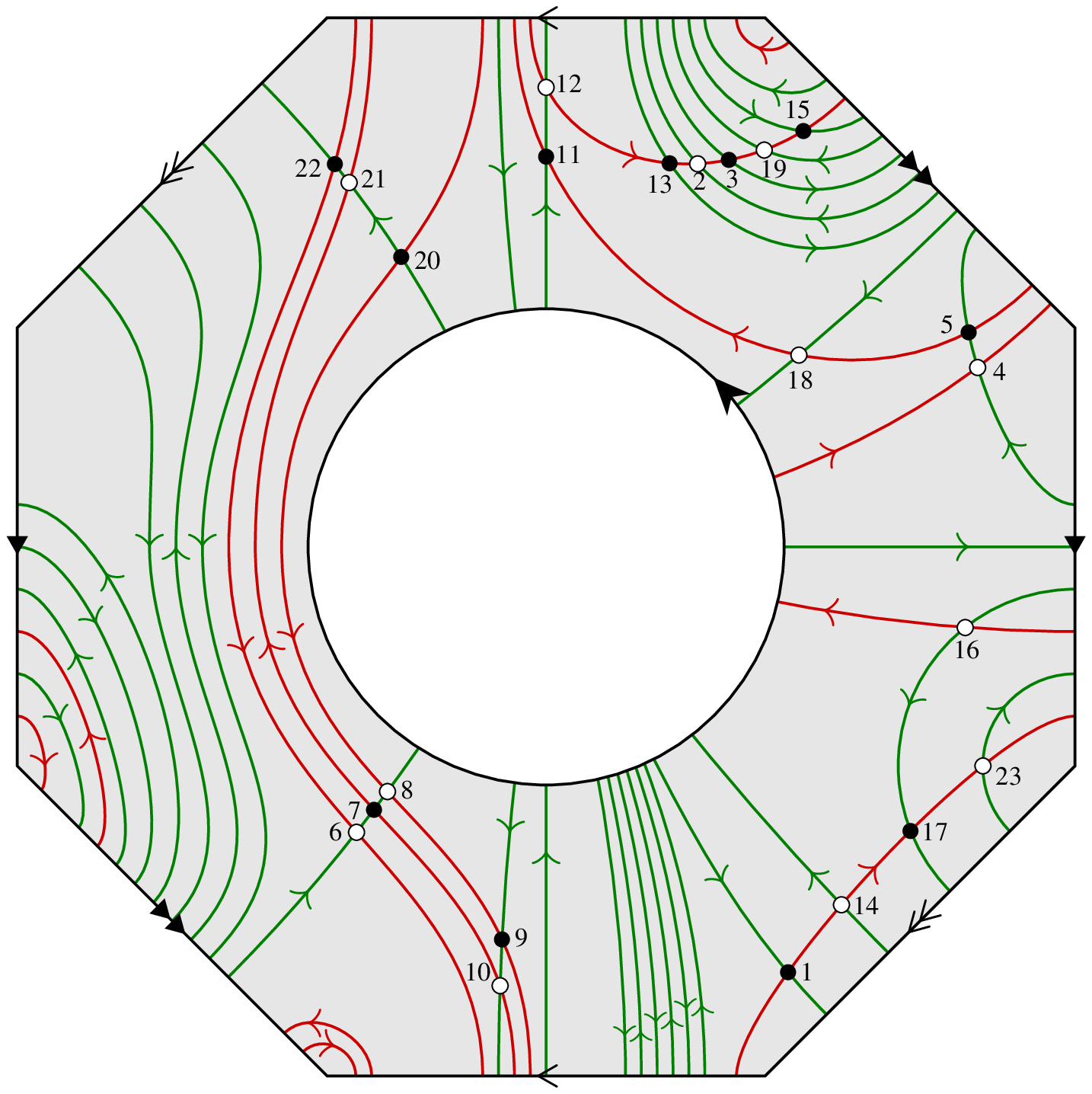}
\includegraphics[scale=.5]{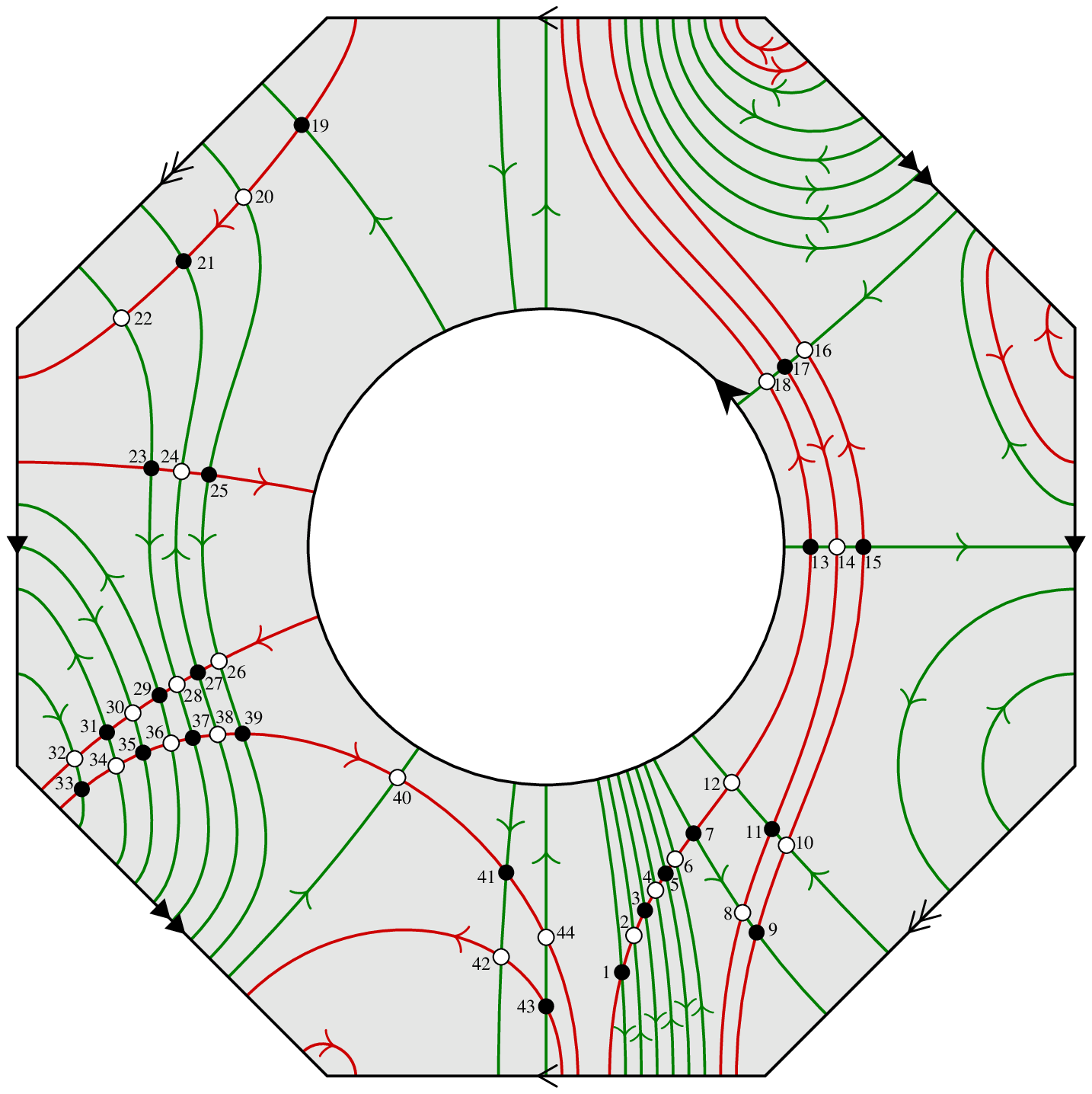}
\caption{Dividing configurations~$(\delta_2^+,\delta_2^-)$ and~$(\delta_2^+,\sigma(\delta_2^-))$}\label{nuts2-fig}
\end{figure}
The right picture in Figure~\ref{nuts2-fig} shows the dividing configuration~$(\delta_2^+,\sigma(\delta_2^-))$
(for a concrete choice of~$\sigma$). The dividing configurations~$(\delta_2^+,\delta_2^-)$
and~$(\delta_2^+,\sigma(\delta_2^-))$
have the following dividing codes, respectively:
\begin{multline}\label{dc-1-eq}
\{(1,2),(3,4,5,6,7,8),(9,10),(11,12),(13,14),(15,16,17,18),(19),(20,21,22,23)\},\\
\{(4,22,6,20,8,9,12,13,2,3,19,15,16),(5,18,11,10,7,21,5),(14,17,23,1,14)\}
\end{multline}
and
\begin{multline}\label{dc-2-eq}
\{
(19,32,33,6),(7,8,9,22,23,28,37,2),(3,36,29,40),(41,42),(43,44),\\
(1,38,27,24,21,10,11,12),(13,14,15,30,35,4),(5,34,31,20,25,26,39,16,17,18)\},\\
\{(19,20,21,22,19),(33,34,35,36,37,38,39,40,41,44,17,14,11,8,33),\\
(26,27,28,29,30,31,32,9,10,15,16,1,2,3,4,5,6,7,12,13,18,43,42,23,24,25)
\}.
\end{multline}

The dividing code~\eqref{dc-1-eq} has exactly three realizations~$(\Pi,\phi)$ with~$\widehat{\partial\Pi}$
isotopic to~$7_6$. For all of them we have~$\partial\Pi\in\mathscr E(R_7)$.

The dividing code~\eqref{dc-2-eq} has exactly $12$ realizations~$(\Pi,\phi)$ with~$\widehat{\partial\Pi}$
isotopic to~$7_6$ (the script~\cite{dyn-script} produces 20 realizations for this dividing code, but in 8 cases
the boundary~$\partial\Pi$ represents the connected sum~$3_1\#4_1$). One of these is shown in Figure~\ref{44rectangles-fig}.
\begin{figure}[ht]
\includegraphics[scale=.65]{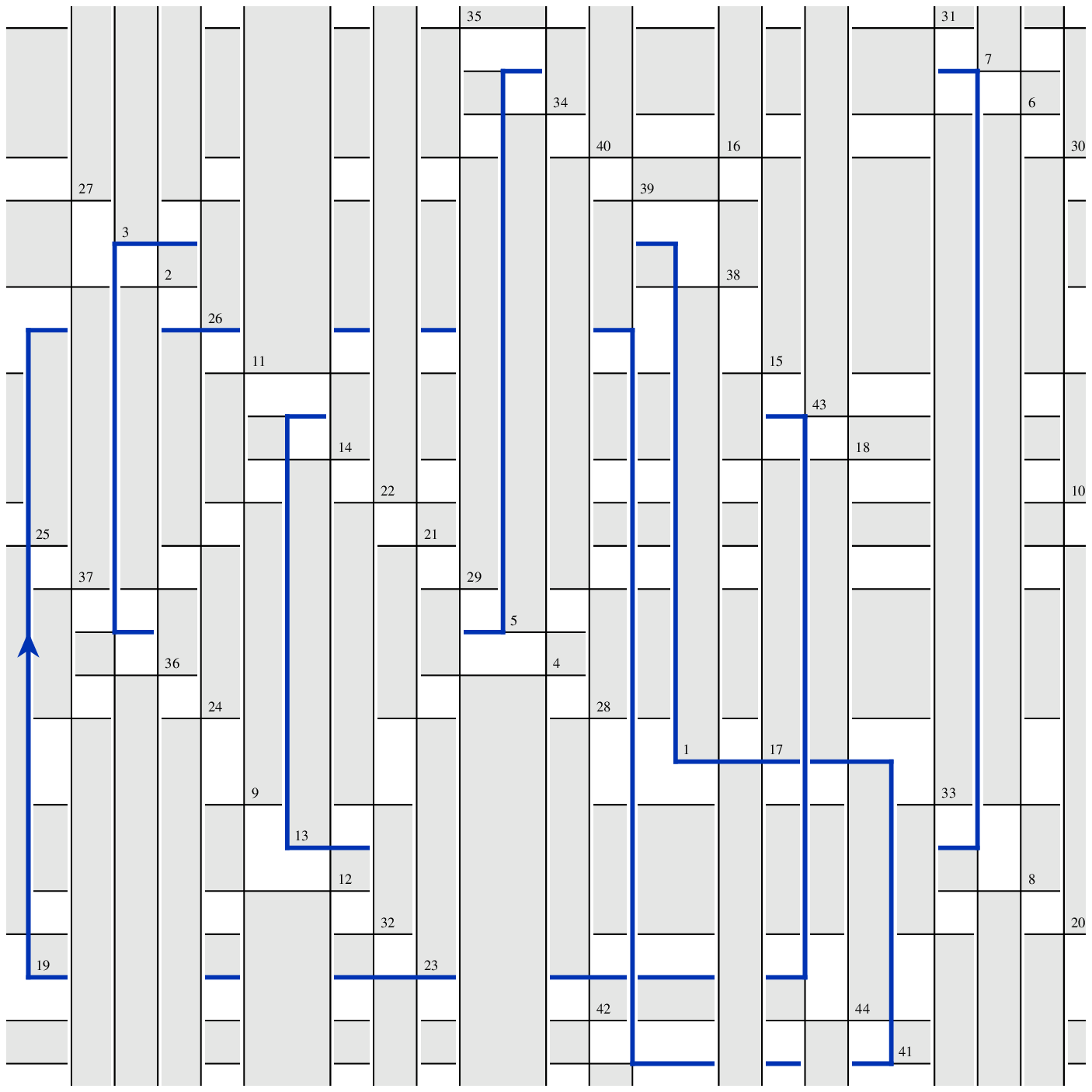}
\caption{A realization of~$(\delta_2^+,\sigma(\delta_2^-))$}\label{44rectangles-fig}
\end{figure}
In all~$12$ cases we again have~$\partial\Pi\in\mathscr E(R_7)$.

Thus, we need not bother to check which of the realizations are proper as it follows
from what was just said and the results of~\cite{distinguishing} that
\begin{equation}\label{implication-eq}
\text{the conditions }\mathscr L_+(R)=\mathscr L_+(R_7)\text{ and }
\mathscr L_-(R)=\mathscr L_-(R_7)\text{ holding simultaneously imply }R\in\mathscr E(R_7).
\end{equation}

It is a direct check that
$$\mathscr L_+(R_3)=7_6^{3+},\quad
\mathscr E\bigl(S_{\overrightarrow{\mathrm I}}(\mu(R_3))\bigr)=
\mathscr E\bigl(S_{\overrightarrow{\mathrm I}}(R_7)\bigr),\quad\text{and}\quad
\mathscr E\bigl(S_{\overleftarrow{\mathrm{II}}}(R_7)\bigr)=
\mathscr E\bigl(S_{\overleftarrow{\mathrm{II}}}(-\mu(R_7))\bigr).$$
This implies
$$\mathscr L_+(R_7)=\mu(7_6^{3+})\quad\text{and}\quad
\mathscr L_-(R_7)=\mathscr L_-(-\mu(R_7)).$$
On the other hand, we have~$\mathscr E(R_7)\ne\mathscr E(-\mu(R_7))$.
Therefore,
$$\mu(7_6^{3+})=\mathscr L_+(R_7)\ne\mathscr L_+(-\mu(R_7))=-7_6^{3+},$$
which implies~(e).
\end{proof}

\begin{prop}
The following is a complete list, without repetitions, of $\xi_-$-Legendrian classes of
topological type~$7_6$ that have maximal possible Thurston--Bennequin number (which is~$-1$):
\begin{equation}\label{list2-eq}
7_6^{1-}=\mu(7_6^{1-})= -7_6^{1-}=-\mu(7_6^{1-}),\ 7_6^{2-}=-\mu(7_6^{2-}),\ -7_6^{2-}=\mu(7_6^{2-}),\
7_6^{3-}=-\mu(7_6^{3-}),\ -7_6^{3-}=\mu(7_6^{3-}).
\end{equation}
\end{prop}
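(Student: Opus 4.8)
The plan is to run the argument of the previous proposition with the roles of the two contact structures interchanged. By~\cite{chong2013} the list~\eqref{list2-eq} is complete and the displayed coincidences within it hold; in particular the four-fold collapse $7_6^{1-}=\mu(7_6^{1-})=-7_6^{1-}=-\mu(7_6^{1-})$ forces $7_6^{1-}$ to have rotation number~$0$, while $7_6^{2-}$ and $7_6^{3-}$, being different from their orientation reversals, have nonzero rotation number. Thus the pair consisting of the maximal Thurston--Bennequin number and the rotation number already separates the three blocks $\{7_6^{1-}\}$, $\{7_6^{2-},7_6^{3-}\}$ and $\{-7_6^{2-},-7_6^{3-}\}$, and the single inequality $7_6^{2-}\ne 7_6^{3-}$ yields $-7_6^{2-}\ne-7_6^{3-}$ upon applying orientation reversal. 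So the whole proposition reduces to establishing $7_6^{2-}\ne 7_6^{3-}$.

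First I would fix the $\xi_-$-labels of the diagrams already at hand. From~\eqref{r6=762+eq} together with the remark that $\mathscr L_-(R_1)=7_6^{3-}$ we have $\mathscr L_-(R_6)=\mathscr L_-(R_1)=7_6^{3-}$, and, just as the identifications~\eqref{r1=761+eq} were read off the pictures, a direct comparison of Figures~\ref{7_6-leg-fig} and~\ref{7_6-diagrams-fig} should give $\mathscr L_-(R_2)=7_6^{2-}$. Since $\mathscr L_+(R_2)=7_6^{2+}$ by~\eqref{r1=761+eq} and $\mathscr L_+(R_6)=7_6^{2+}$ by~\eqref{r6=762+eq}, the two diagrams share the $\xi_+$-class $7_6^{2+}$; this pair is the exact mirror of the pair used in part~(d), where $R_1$ and $R_6$ agreed in $\xi_-$ and had to be separated in $\xi_+$, whereas here $R_2$ and $R_6$ agree in $\xi_+$ and must be separated in $\xi_-$.

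Next I would construct a rectangular diagram $\Pi_3$ of a Seifert surface with $\partial\Pi_3=R_6$ (equivalently, for $R_2$), verify that $\widehat\Pi_3$ is orientable of genus two, and read off a canonic dividing configuration $(\delta_3^+,\delta_3^-)$. Because we now wish to anchor on the common $\xi_+$-class and probe the $\xi_-$-class, the datum that matters is the $+$-representative set $\{\delta_3^+,\sigma(\delta_3^+)\}$ for $R_6$, and hence for $R_2$, where $\sigma$ is the order-two symmetry of $7_6$ used throughout. As in part~(d) I would determine $\sigma_*$ on $\pi_1(\mathbb S^3\setminus\widehat R_6)$ from the Wirtinger relations, choose the generators so that $\sigma_*$ inverts each of them, and conclude that $\sigma$ can be taken to be an involution preserving $\widehat\Pi_3$. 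By~\cite[Theorem~2.1 and Corollary~2.1]{distinguishing}, the equality $7_6^{2-}=7_6^{3-}$ would then force $R_2$ to be exchange-equivalent to the boundary of a proper realization of $(\delta_3^+,\delta_3^-)$ or of $(\delta_3^+,\sigma(\delta_3^-))$.

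The hard part will be the exhaustive realization search with the script~\cite{dyn-script}: one must enumerate, up to combinatorial equivalence, all realizations $(\Pi,\phi)$ whose boundary represents the knot~$7_6$, discard the spurious ones whose boundary is a different knot type or a connected sum (as happened for the code~\eqref{dc-2-eq} in part~(e)), and verify that none of the surviving boundaries lies in $\mathscr E(R_2)$. As in parts~(d) and~(e), I expect this to terminate in a short explicit list of boundaries, each combinatorially distinct from $R_2$ and admitting no non-trivial exchange move, which gives $R_2\notin\mathscr E(\cdots)$ and therefore $\mathscr L_-(R_2)\ne\mathscr L_-(R_6)$, that is, $7_6^{2-}\ne 7_6^{3-}$. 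The two real risks are that the dividing code of $\Pi_3$ may admit substantially more realizations than the codes in part~(e), making the search heavier, and that one of the surviving boundaries could accidentally be exchange-equivalent to $R_2$; should the latter occur I would replace $\Pi_3$ by a different Seifert surface, or pass to the $r_\medvert$-image of the whole configuration, until a separating surface is found.
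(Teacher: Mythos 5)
Your reduction in the first paragraph is where the proposal breaks down, and the error is fatal to everything that follows. According to~\cite{chong2013} (and as the paper's proof records), what remains unsettled is \emph{not} $7_6^{2-}\ne7_6^{3-}$ --- that inequality, and indeed all pairwise distinctness among the listed classes, is already established in the atlas --- but the single question whether $7_6^{2-}=\mu(7_6^{2-})=-7_6^{2-}$, i.e.\ whether $7_6^{2-}$ coincides with its own reverse. Your argument gets this backwards because of a logical slip: from ``$7_6^{2-}$ and $7_6^{3-}$ are different from their orientation reversals'' you infer that they have nonzero rotation number. That implication is false (only its converse holds: nonzero rotation number implies distinctness from the reverse), and in the case of $7_6^{2-}$ the premise is exactly the unsettled conjecture, so the reasoning is also circular. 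In fact the rotation number of $7_6^{2-}$ must be $0$: were it nonzero, the classical invariants would already distinguish $7_6^{2-}$ from $-7_6^{2-}$ and nothing would have been left open in~\cite{chong2013}. Consequently, even if your realization-search program for a new surface~$\Pi_3$ were carried out flawlessly, it would only reprove the known inequality $7_6^{2-}\ne7_6^{3-}$ (by separating $\mathscr L_-(R_2)$ from $\mathscr L_-(R_6)=\mathscr L_-(R_1)=7_6^{3-}$), and the actual content of the proposition, $7_6^{2-}\ne\mu(7_6^{2-})$, would remain unproved.

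For comparison, the paper's proof needs no new Seifert surface and no new exhaustive search: it reuses the implication~\eqref{implication-eq}, which was already established (at the cost of the realization searches) in the proof of part~(e) of the preceding proposition. One checks by direct computation with stabilizations and exchange classes that $\mathscr L_+(R_7)=\mathscr L_+(R_8)$, $\mathscr L_-(R_7)=7_6^{2-}$, $\mathscr L_-(R_8)=\mu(7_6^{2-})$, while $\mathscr E(R_7)\ne\mathscr E(R_8)$; if $7_6^{2-}$ were equal to $\mu(7_6^{2-})$, then $R_8$ would satisfy both hypotheses of~\eqref{implication-eq} and hence would lie in~$\mathscr E(R_7)$, a contradiction. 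Your instinct to interchange the roles of the two contact structures and anchor on a common $\xi_+$-class is the right general mechanism, but it must be applied to a pair of diagrams whose $\xi_-$-classes are $7_6^{2-}$ and $\mu(7_6^{2-})$ (such as $R_7$ and $R_8$), not to a pair representing $7_6^{2-}$ and $7_6^{3-}$.
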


\begin{proof}
It is established in~\cite{chong2013} that the list is complete
and the classes are pairwise distinct except for~$7_6^{2-}$ and~$\mu(7_6^{2-})=-7_6^{2-}$,
which may be coincident. So, we only need to show that~$7_6^{2-}\ne\mu(7_6^{2-})$.

By a direct check we find: $\mathscr L_-(r_\medvert(R_4))=7_6^{2-}$,
$$
\mathscr E\bigl(S_{\overleftarrow{\mathrm I}}(R_7)\bigr)=
\mathscr E\bigl(S_{\overleftarrow{\mathrm I}}(R_8)\bigr),\quad
\mathscr E\bigl(S_{\overleftarrow{\mathrm{II}}}(R_7)\bigr)=
\mathscr E\bigl(S_{\overrightarrow{\mathrm{II}}}(r_{\medvert}(R_4))\bigr),\quad
\mathscr E\bigl(S_{\overrightarrow{\mathrm{II}}}(R_8)\bigr)=
\mathscr E\bigl(S_{\overrightarrow{\mathrm{II}}}(\mu(r_\medvert(R_4)))\bigr),$$
which imply
$$\mathscr L_+(R_7)=\mathscr L_+(R_8),\quad
\mathscr L_-(R_7)=7_6^{2-},\quad\text{and}\quad
\mathscr L_-(R_8)=\mu(7_6^{2-}).$$
Since $\mathscr E(R_7)\ne\mathscr E(R_8)$, it follows from~\eqref{implication-eq}
that~$7_6^{2-}\ne\mu(7_6^{2-})$.
\end{proof}

\end{document}